\numberwithin{figure}{section}
\numberwithin{equation}{section}
\numberwithin{mycounter}{section}
\NewDocumentCommand{\declthm}{m m O{plain}}{% env, name, style
  \declaretheorem[
  name = #2,
  sibling = mycounter,
  style = #3,
  refname = {\MakeLowercase{#2},\MakeLowercase{#2}s},
  Refname = {\makefirstuc{#2},\makefirstuc{#2}s}
  ]
  {#1}
}
\declaretheorem[name = Theorem]{theoremintro}
\newcommand{\K}{\Bbbk}
\newcommand{\Q}{\mathbb{Q}}
\NewDocumentCommand{\qiso}{s}{
  \IfBooleanTF #1
  {\xleftarrow{\sim}}
  {\xrightarrow{\sim}}%
}
\newcommand{\operadify}[1]{\mathtt{#1}}
\newcommand{\End}{\operadify{End}}
\newcommand{\Ass}{\operadify{Ass}}
\newcommand{\Ger}{\operadify{Ger}}
\newcommand{\Com}{\operadify{Com}}
\newcommand{\DD}{\operadify{D}}
\newcommand{\SC}{\operadify{SC}}
\newcommand{\Sc}{\operadify{sc}}
\newcommand{\PP}{\operadify{P}}
\newcommand{\QQ}{\operadify{Q}}
\newcommand{\magma}{\operadify{\Omega}}
\newcommand{\tMagma}{{\magma\magma}}
\newcommand{\CoB}{\operadify{CoB}}
\newcommand{\CoP}{\operadify{CoP}}
\newcommand{\PaB}{\operadify{PaB}}
\newcommand{\PaP}{\operadify{PaP}}
\newcommand{\CoPB}{\operadify{CoPB}}
\newcommand{\PaPB}{\operadify{PaPB}}
\newcommand{\CD}{\operadify{CD}}
\newcommand{\Sh}{\operadify{Sh}}
\newcommand{\PaSh}{\operadify{PaSh}}
\newcommand{\PaCD}{\operadify{PaCD}}
\newcommand{\CDh}{\widehat{\CD}}
\newcommand{\PaBh}{\widehat{\PaB}}
\newcommand{\PaCDhp}{\widehat{\PaCD}_{+}}
\newcommand{\PaPCD}{\operadify{PaP}\widehat{\operadify{CD}}_{+}}
\newcommand{\col}[1]{\mathfrak{#1}}
\DeclareMathOperator{\Conf}{Conf}
\DeclareMathOperator{\ob}{ob}
\DeclareMathOperator{\Hom}{Hom}
\DeclareMathOperator{\id}{id}
\newcommand{\categorify}[1]{\mathsf{#1}}
\newcommand{\ZZ}{\mathcal{Z}}  % centre de Drinfeld
\newcommand{\cC}{\categorify{C}}
\newcommand{\cM}{\categorify{M}}
\newcommand{\cN}{\categorify{N}}
\newcommand{\cdga}{\categorify{CDGA}}
\newcommand{\Set}{\categorify{Set}}
\newcommand{\rlz}[1]{\langle #1 \rangle^{\mathbb{L}}}
\newcommand{\bdot}[2]{\draw (#1) node [fill=black, circle, inner
  sep=1.5pt] {} node[above] {#2}}
\newcommand{\wdot}[2]{\draw (#1) node [draw = black, fill=white, circle, inner
  sep=1.5pt] {} node[above] {#2}}
\tikzstyle{cob}=[<->]
\begin{document}

\title{Swiss-Cheese operad and Drinfeld center}
\author{Najib Idrissi\thanks{Laboratoire Paul Painlevé, Université Lille 1 and CNRS, Cité Scientifique, 59655 Villeneuve d'Ascq Cedex, France. \href{mailto:najib.idrissi-kaitouni@math.univ-lille1.fr}{najib.idrissi-kaitouni@math.univ-lille1.fr}}}
\date{January 10, 2017}
\maketitle

\begin{abstract}
  We build a model in groupoids for the Swiss-Cheese operad, based on parenthesized permutations and braids. We relate algebras over this model to the classical description of algebras over the homology of the Swiss-Cheese operad. We extend our model to a rational model for the Swiss-Cheese operad, and we compare it to the model that we would get if the operad Swiss-Cheese were formal.
\end{abstract}

\tableofcontents

\section{Introduction}
\label{sec.introduction}

The little disks operads $\DD_{n}$ of Boardman--Vogt and May~\cite{BoardmanVogt1973,May1972} govern algebras which are associative and (for $n \geq 2$) commutative up to homotopy. For $n=2$, one can see that the fundamental groupoid of $\DD_{2}$ forms an operad $\pi \DD_{2}$ equivalent to an operad in groupoids $\PaB$, called the operad of parenthesized braids, which governs braided monoidal categories~\cite[§I.6]{Fresse2016a}. Since the homotopy of $\DD_{2}$ is concentrated in degrees $\leq 1$, this is enough to recover $\DD_{2}$ up to homotopy. For $n=1$, one can also easily see that $\pi\DD_{1}$ is equivalent to an operad $\PaP$, called the operad of parenthesized permutations, which governs monoidal categories.

The Swiss-Cheese operad $\SC = \SC_{2}$ of Voronov~\cite{Voronov1999} governs the action of a $\DD_{2}$-algebra on a $\DD_{1}$-algebra by a central morphism. As explained in~\cite{Hoefel2009}, the Swiss-Cheese operad is intimately related to the ``Open-Closed Homotopy Algebras'' (OCHAs) of Kajiura and Stasheff~\cite{KajiuraStasheff2006}, which are of great interest in string field theory and deformation quantization.

We aim to study the fundamental groupoid of $\SC$, which is still an operad. This fundamental groupoid is again enough to recover $\SC$ up to homotopy. In a first step, we established the following theorem:

\begin{theoremintro}[{See \Cref{thm.model-sc} and \Cref{cor.alg-papb}}] \label{thm.A} The fundamental groupoid operad $\pi \SC$ is equivalent to an operad $\PaPB$ whose algebras are triples $(\cM, \cN, F)$, where $\cN$ is a monoidal category, $\cM$ is a braided monoidal category, and $F : \cM \to \ZZ(\cN)$ is a strong braided monoidal functor from $\cM$ to the Drinfeld center $\ZZ(\cN)$ of $\cN$.
\end{theoremintro}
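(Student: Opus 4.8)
The plan is to prove the statement in two steps, matching the two results cited in the header: first the equivalence of operads in groupoids $\pi\SC\simeq\PaPB$ (\Cref{thm.model-sc}), then the identification of $\PaPB$-algebras (\Cref{cor.alg-papb}). The second step amounts to unwinding a generators-and-relations presentation of $\PaPB$, so the bulk of the work --- and the main difficulty --- is the first step.

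\emph{The equivalence $\pi\SC\simeq\PaPB$.} Write $\SC\binom{q}{p}$ for the space of Swiss-Cheese operations with $q$ closed and $p$ open inputs (and a fixed output colour). Each such space is homotopy equivalent to the product of an ordered configuration space of $q$ points in the open upper half-plane with the space of $p$ labelled points on the boundary line; the latter is a disjoint union of $p!$ contractible pieces indexed by their linear order, so $\SC\binom{q}{p}$ is a disjoint union of $K(\pi,1)$'s, its components indexed by the linear order of the open inputs and its fundamental groups the pure braid groups on the $q$ closed inputs. Applying the fundamental-groupoid functor $\pi$ --- which preserves finite products, hence sends topological operads to operads in groupoids --- produces $\pi\SC$; and since all the spaces in sight are aspherical, the unit $\SC\to B\pi\SC$ of the adjunction $\pi\dashv B$ (with $B$ the classifying space of a groupoid) is an arity-wise weak equivalence, so nothing is lost. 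To replace the ``fat'' operad $\pi\SC$ by the small combinatorial model $\PaPB$ I would imitate Fresse's treatment of $\PaB$~\cite{Fresse2016a}: choose in each $\SC\binom{q}{p}$ a family of \emph{parenthesized} configurations indexed by fully parenthesized words in $p$ open and $q$ closed letters (the open letters in some order, the closed letters interleaved with them and ordered arbitrarily), arranged so that (i) this family meets every connected component and (ii) it is stable under every Swiss-Cheese composition --- in particular under the ``closed-into-open'' insertions that place a little-disk configuration above one of the open inputs, which is the new phenomenon compared with $\DD_2$. The full subgroupoid-operad of $\pi\SC$ on these basepoints is then on one hand objectwise equivalent to $\pi\SC$ (a fundamental groupoid restricted to a set of points meeting every component), and on the other hand identified, by a direct inspection of its morphisms as parenthesized coloured braids and permutations, with $\PaPB$; composing the two gives the asserted equivalence. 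Restricting to operations with all-closed inputs recovers $\PaB\simeq\pi\DD_2$, and to operations with all-open inputs and open output recovers $\PaP\simeq\pi\DD_1$.

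\emph{The description of $\PaPB$-algebras.} From the combinatorial definition one extracts a presentation of $\PaPB$ as an operad in groupoids. Over the standard presentations of $\PaB$ (a binary closed operation, with generating isomorphisms an associator and a braiding, subject to a pentagon and two hexagons) and of $\PaP$ (a binary open operation with an associator, subject to a pentagon), the mixed part is generated by a single unary operation $\phi$ with one closed input and open output, together with two generating isomorphisms: a monoidal-structure isomorphism comparing $\phi$ of a closed product with the open product of two copies of $\phi$, and a half-braiding isomorphism sliding $\phi$ past an open input. The defining relations state that $\phi$ is strong monoidal, that the half-braiding satisfies the two hexagon axioms of the Drinfeld centre and is compatible with the monoidal structure of $\phi$, and that it intertwines the braiding of the closed operation with the open product. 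A $\PaPB$-algebra in categories is by definition a pair $(\cM,\cN)$ of categories with functors and natural isomorphisms realising precisely these generators and relations; reading this off, $\cN$ is monoidal, $\cM$ is braided monoidal, $\phi$ gives a strong monoidal functor $\bar F\colon\cM\to\cN$, the half-braiding makes each $\bar F(m)$ into an object of the Drinfeld centre $\ZZ(\cN)$ naturally in $m$, and the remaining relation says exactly that the resulting functor $F\colon\cM\to\ZZ(\cN)$ is strong braided monoidal --- which is \Cref{cor.alg-papb}.

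The principal obstacle is the combinatorial bookkeeping in the first step: exhibiting a system of parenthesized basepoint configurations in the mixed spaces $\SC\binom{q}{p}$ that is at once large enough to meet every component, explicit enough to be matched with $\PaPB$, and closed under \emph{all} Swiss-Cheese compositions --- the open-into-open ones from $\DD_1$, the closed-into-closed ones from $\DD_2$, and the closed-into-open ones that are responsible for the half-braiding. By contrast, the asphericity argument, the passage between $\pi\SC\simeq\PaPB$ and its geometric shadow $\SC\simeq B\PaPB$, and the dictionary between the presentation of $\PaPB$ and the notion of a braided monoidal functor into a Drinfeld centre are all routine.
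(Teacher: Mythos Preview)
Your approach mirrors the paper's closely on both steps, but you have somewhat inverted where the real work lies. For the equivalence $\pi\SC\simeq\PaPB$, the paper packages your ``system of parenthesized basepoints closed under all compositions'' as a free-operad statement: it shows that the suboperad of $\SC$ generated by three explicit elements $\mu_{\col c},\mu_{\col o},f$ is free (hence isomorphic to a two-coloured magma operad $\tMagma$), then \emph{defines} $\PaPB$ as the pullback of a simpler unparenthesized operad $\CoPB$ along a surjection $\omega:\tMagma\to\ob\CoPB$. The zigzag $\pi\SC\xleftarrow{\sim}\pi\SC|_{\tMagma}\cong\PaPB\xrightarrow{\sim}\CoPB$ then falls out in a few lines. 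So the bookkeeping you flag as the principal obstacle is handled once and for all at the level of objects, via the free-suboperad lemma and the pullback trick; there is no need to verify closure under compositions by hand.

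By contrast, the step you call routine --- extracting and \emph{proving} the presentation of $\PaPB$ --- is where the paper does most of its work (\Cref{thm.gen-rel-papb}). Listing the generators $\mu_{\col c},\mu_{\col o},f,\alpha_{\col c},\alpha_{\col o},\tau,p,\psi$ and the coherence diagrams is straightforward, and you have them essentially right; showing that every morphism of $\PaPB$ decomposes in terms of these, and that the listed relations suffice to make any such decomposition unambiguous, is not. The paper's device is to factor an arbitrary morphism $Y\in\PaPB(n,m)$ as $\mu'\circ X\circ\mu$, where $\mu,\mu'$ are ``shuffle-type'' morphisms (isotopies moving aerial strands past terrestrial ones without crossing aerial strands amongst themselves) and $X=\mu_{\col o}(X^{\col o},f(X^{\col c}))$ lies in the image of $\PaP(n)\times\PaB(m)$; it then invokes the coherence theorems of MacLane and of Epstein (for monoidal functors) to show that the image under a putative morphism $\theta:\PaPB\to\PP$ is independent of all the choices in this decomposition, and compatible with operadic composition. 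Without this decomposition-plus-coherence argument, ``from the combinatorial definition one extracts a presentation'' is an assertion rather than a proof.
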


In this theorem, the monoidal categories have no unit. We also consider the unitary version $\SC_{+}$ of the Swiss-Cheese operad, and we obtain (\Cref{prop.zigzag-unit}) an extension $\PaPB_{+}$ of the model where the monoidal categories have a strict unit and the functor strictly preserves the unit.

The result of Theorem~\ref{thm.A} is a counterpart for operads in groupoids of statements of~\cite[Proposition 31]{Ginot2015} and~\cite[Example 2.13]{AyalaFrancisTanaka2016} about the $\infty$-category of factorization algebras on the upper half plane.

In a second step, we rely on the result of Theorem~\ref{thm.A} to construct an operad rationally equivalent (in the sense of rational homotopy theory) to the completion of the Swiss-Cheese operad. To this end, we use Drinfeld associators, which we see as morphisms $\PaB_{+} \to \CDh_{+}$, where $\CDh_{+}$ is the completed operad of chord diagrams. The existence of such a rational Drinfeld associator is equivalent to the rational formality of $\DD_{2}$, but the inclusion $\DD_{1} \to \DD_{2}$ is not formal; equivalently, the constant morphism $\PaP_{+} \to \CDh_{+}$ does not factor through a Drinfeld associator. We prove the following theorem:
\begin{theoremintro}[{See \Cref{thm.rat-model}}]
  Given a choice of Drinfeld associator $\phi \in Ass^{1}(\Q)$, there is an operad in groupoids $\PaPCD^{\phi}$ built using chord diagrams, parenthesized permutations, and parenthesized shuffles, which is rationally equivalent to $\pi \SC_{+}$.
\end{theoremintro}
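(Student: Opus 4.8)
The plan is to transport the model $\PaPB_{+}\simeq\pi\SC_{+}$ of \Cref{thm.A} and \Cref{prop.zigzag-unit} along a Drinfeld associator $\phi$, treating the ``closed'' and ``open'' colours differently. First I would fix a presentation of the two-coloured operad $\PaPB_{+}$ relative to its closed suboperad $\PaB_{+}$: with the parenthesized two-coloured words as the (fixed) object sets, $\PaPB_{+}$ is generated by the associativity isomorphisms of the open colour, the unary operation realising the underlying functor of $F$, the structure isomorphism $F(x)\otimes F(y)\cong F(x\otimes y)$, and the half-braiding $F(x)\otimes n\Rightarrow n\otimes F(x)$ coming from the target being the Drinfeld centre, subject to the open pentagon, the naturality squares, the Drinfeld-centre hexagon for the half-braiding, the mixed pentagons expressing that $F$ is strong monoidal, and the relation expressing that $F$ is \emph{braided}; the closed part of the presentation is the classical one of $\PaB_{+}$ from \cite{Fresse2016a}. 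From this presentation one reads off that all the mixed automorphism groups of objects are pure braid groups $P_{p}$ on the $p$ closed letters (there is no room in the half-plane for a closed point to loop an open one), so only these groups need to be completed.

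Next I would define $\PaPCD^{\phi}$ by the ``same'' presentation with the closed part $\PaB_{+}$ replaced by the completed chord-diagram operad $\CDh_{+}$, to which $\PaBh_{+}$ is isomorphic via $\phi$; the open-only part kept \emph{unchanged} as $\PaP_{+}$; and the mixed part taken to be a completed operad $\PaCDhp$ built from the combinatorial groupoids of parenthesized shuffles $\PaSh_{+}$ by decorating the shuffled two-coloured words with exponentials of horizontal chord diagrams on the closed letters, the half-braiding and the monoidal structure of $F$ being sent to the corresponding $\phi$-twisted exponentials. The reason the open-only colour must remain combinatorial is exactly the obstruction recorded in the introduction: the constant morphism $\PaP_{+}\to\CDh_{+}$ does not assemble with an associator into an operad morphism out of $\PaPB_{+}$, whereas leaving that colour rigid and completing only the closed and mixed data does assemble. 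Morally, $\PaPCD^{\phi}$ is the arity-wise Malcev completion $\widehat{\PaPB}_{+}$ made explicit through $\phi$.

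The key step, and the main obstacle, is to check that $\PaPCD^{\phi}$ is well defined, i.e. that the $\phi$-twisted half-braiding and monoidal-structure isomorphisms satisfy all the imposed relations. This is the analogue of Drinfeld's theorem that an associator yields a well-defined operad isomorphism $\PaBh_{+}\cong\CDh_{+}$, where the pentagon and the two hexagons are precisely what is needed; here the extra input is the set of ``mixed'' relations coming from the Drinfeld-centre axioms. I expect these to reduce to the pentagon and the two hexagon identities of $\phi$: the half-braiding hexagon is a ``half'' of a braiding hexagon, the mixed pentagons are formal consequences of the open and closed pentagons, and the braided-functor relation is a hexagon identity read inside the centre. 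Carrying out this reduction cleanly — keeping track of which fragment of the associator axioms controls which mixed relation, and of signs/orientations — is where the real work lies; a bookkeeping device such as the presentations of $\PaCDhp$ and $\PaSh_{+}$ in terms of generating morphisms should make it manageable.

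Finally I would exhibit the zigzag of rational equivalences. The completion map $\PaPB_{+}\to\widehat{\PaPB}_{+}\cong\PaPCD^{\phi}$ restricts to $\phi\colon\PaB_{+}\to\PaBh_{+}\cong\CDh_{+}$ on the closed colour, to the identity of $\PaP_{+}$ on the open-only colour, and to the $\phi$-twisted completion on the mixed colour. It is an arity-wise rational equivalence of operads in groupoids: on closed arities this is the statement from \cite{Fresse2016a} that $\phi$ induces an equivalence after Malcev completion; on open-only arities the groupoids are homotopy-discrete and the map is a bijection on components; on mixed arities it reduces to the classical fact that the Malcev completion $P_{p}\to\exp(\widehat{\mathfrak{p}}_{p})$ of the pure braid group is a rational equivalence (together with a bijection on components), and this is compatible with the whole operad structure by construction of $\PaPCD^{\phi}$. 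Combining this with $\pi\SC_{+}\simeq\PaPB_{+}$ from \Cref{thm.A} and the unitary comparison of \Cref{prop.zigzag-unit} yields the claim.
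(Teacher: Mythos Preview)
Your overall strategy—assemble $\PaPCD^{\phi}$ from parenthesized shuffles, parenthesized permutations, and chord diagrams on the closed strands, then compare to $\PaPB_{+}$ via the associator—is the paper's strategy, and your observation that the automorphism groups in arity $(n,m)$ are pure braid groups $P_{m}$ on the closed letters only is correct and is what makes the arity-wise rational equivalence go through.

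The substantive difference is in how the operad structure on $\PaPCD^{\phi}$ is \emph{defined and checked}. You propose a generators-and-relations route: write down the $\phi$-twisted half-braiding and monoidal-structure morphisms, then verify the Drinfeld-centre diagrams of \Cref{fig.f-monoidal,fig.f-center,fig.f-braided,fig.f-monoid-center} one by one, hoping each reduces to the pentagon and hexagons for $\phi$. You rightly flag this as ``where the real work lies'' but leave it as an expectation. The paper bypasses this entirely by a structural device. It first introduces an auxiliary operad $\PaPB'_{+}$, defined as a full subgroupoid of $(\PaP_{+}(n)\times\PaB_{+}(m))\times_{\Sigma_{n}\times\Sigma_{m}}\PaSh_{+}(n,m)$, and gives it an operad structure \emph{not} by a presentation but through an explicit morphism $\rho:\PaPB'_{+}\to\PaB_{+}[\cdot]$ into the \emph{shifted operad} $\PaB_{+}[n](m)=\PaB_{+}(n+m)$. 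The composition law is a closed formula in terms of $\rho$ and the composition of $\PaB_{+}[\cdot]$, so associativity and equivariance are inherited formally—no relation-checking at all. One then shows $\PaPB'_{+}\qiso\CoPB_{+}$. The operad $\PaPCD^{\phi}$ is produced by running the same recipe with $\PaB_{+}$ replaced by $\PaCDhp$: the analogue $\rho_{\phi}:\PaPCD^{\phi}\to\PaCDhp[\cdot]$ is built using $\tilde\phi_{+}$ to transport the shuffle-type morphisms, and well-definedness again comes for free from the operad axioms of $\PaCDhp[\cdot]$. The comparison map $\PaPB'_{+}\to\PaPCD^{\phi}$ is induced by $\tilde\phi_{+}$ on the middle factor and is a rational equivalence because $\tilde\phi_{+}$ is.

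So the reduction you hope for (mixed relations $\Rightarrow$ pentagon and hexagons) is true, and the shifted-operad construction is precisely the mechanism that makes it transparent: every mixed relation already lives inside $\PaB_{+}[\cdot]$ as a consequence of the operad axioms, and an operad morphism $\tilde\phi_{+}$ preserves it automatically. Your route would reach the same destination, but the bookkeeping you anticipate is exactly what the paper's $\rho$/$\rho_{\phi}$ formalism is designed to eliminate. Two smaller points: the paper does not identify $\PaPCD^{\phi}$ with the Malcev completion $\widehat{\PaPB}_{+}$ as you suggest—it only produces a rational categorical equivalence through the intermediate $\PaPB'_{+}$, which is a weaker and easier claim; and in the paper $\PaCDhp$ denotes the closed-colour operad of parenthesized chord diagrams, not a mixed object.
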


The Swiss-Cheese operad is not formal~\cite{Livernet2015}, thus it cannot be recovered from its homology $H_{*}(\SC)$. We use the splitting of $H_{*}(\SC)$ as a product~\cite{Voronov1999} to build an operad in groupoids $\CDh \times_{+} \PaP$, and we compare it to our rational model of $\SC$.

Independently of the author, Willwacher~\cite{Willwacher2015a} found a different model for the Swiss-Cheese operad in any dimension $n \ge 2$ that uses graph complexes. His model extends Kontsevich's~\cite{Kontsevich1999} quasi-isomorphism $\mathrm{Graphs}_{n} \qiso \Omega^{*}(\DD_{n})$ from the proof of the formality of $E_{n}$, whereas our model extends (after passing to classifying spaces) Tamarkin's~\cite{Tamarkin2003} model $\operatorname{B} \hat{\mathbb{U}} \hat{\mathfrak{p}} = \operatorname{B} \CDh$ of $E_{2}$. Thus, in contrast to Willwacher's model, our own model is related to Drinfeld's original approach to quantization. It would be interesting to compare the two, e.g. as was done by Ševera--Willwacher~\cite{SeveraWillwacher2011} for the little $2$-disks operad.

This paper is organized as follows: in \Cref{sec.background}, we recall some background on the Swiss-Cheese operad and relative operads; in \Cref{sec.perm-braid}, we construct two algebraic models for the Swiss-Cheese operad; in \Cref{sec.drinfeld-center}, we describe what the algebras over these models are, using Drinfeld centers; and in \Cref{sec.chord-diagrams}, we construct a rational model in groupoids for the Swiss-Cheese operad using chords diagrams and Drinfeld associators.

\paragraph{Acknowledgments} I would like to thank Benoit Fresse for multiple helpful discussions about the content of this paper.

\section{Background}
\label{sec.background}

The little $n$-disks operad $\DD_{n}$ is built out of configurations of embeddings of little $n$-disks (whose images have disjoint interiors) in the unit $n$-disk, and operadic composition is given by composition of such embeddings -- see \cite{BoardmanVogt1973, May1972} for precise definitions.

The Swiss-Cheese operad $\SC$ is an operad with two colors, $\col{c}$ and $\col{o}$ (standing for ``closed'' and ``open''). The space of operations $\SC(x_{1}, \dots, x_{n}; \col{c})$ with a closed output is equal to $\DD_{2}(n)$ if $x_{1} = \dots = x_{n} = \col{c}$, and it is empty otherwise. The space $\SC(x_{1}, \dots, x_{n}; \col{o})$ is the space of configurations of embeddings of full disks (corresponding to the color $\col{c}$) and half disks (corresponding to the color $\col{o}$), with disjoint interiors, inside the unit upper half disk (see \Cref{fig.exa-sc} for an example). Composition is again given by composition of embeddings.

\begin{figure}[htbp]
  \centering
  \def\svgwidth{0.5\textwidth}
  \import{fig/}{sc.pdf_tex}
  \caption{Example of an element in $\SC(3,2)$}
  \label{fig.exa-sc}
\end{figure}

The Swiss-Cheese operad is an example of a relative operad~\cite{Voronov1999}: it can be seen as an operad in the category of right modules (in the sense of~\cite{Fresse2009}) over another operad.
\begin{definition}
  Let $\PP$ be a (symmetric, one-colored) operad. A \textbf{relative operad over $\PP$} is an operad $\QQ$ in the category of right modules over $\PP$. Equivalently, it is a two-colored operad $\QQ$ (where the two colors are called $\col{c}$ and $\col{o}$) such that:
  \begin{gather*}
    \QQ(x_{1}, \dots, x_{n}; \col{c}) =
    \begin{cases}
      \PP(n), & \text{if } x_{1} = \dots = x_{n} = \col{c}; \\
      \varnothing, & \text{otherwise}.
    \end{cases}
    \\
    \QQ(n,m) : = \QQ(\underbrace{\col{o}, \dots,
      \col{o}}_{n}, \underbrace{\col{c}, \dots, \col{c}}_{m} ;
    \col{o}) = \bigl( \QQ(n) \bigr)(m).
  \end{gather*}
  If $\QQ$ is such a relative operad, we will write
  $\QQ^{\col{c}}(m) := \PP(m)$.
\end{definition}

The Swiss-Cheese operad $\SC$ is a relative operad over the little disks operad $\DD_{2}$. We also consider the unitary version of the Swiss-Cheese operad $\SC_{+}$, which is a relative operad over the unitary version of the little disks operad $\DD_{2}^{+}$, and which satisfies $\SC_{+}(0,0) = *$. Composition with the nullary elements simply forgets half disks or full disks of the configuration.

\begin{remark}
  We consider a variation of the Swiss-Cheese operad, where we allow operations with only closed inputs and an open output, whereas in Voronov's definition these configurations are forbidden. We write $\SC^{\mathrm{vor}}$ for Voronov's version, so that $\SC^{\mathrm{vor}}(0,m) = \varnothing$ while $\SC(0,m) \simeq \DD_{2}(m) \neq \varnothing$.
\end{remark}

\section{Permutations and braids}
\label{sec.perm-braid}

\subsection{Colored version}
\label{sec.colored-version}

We first define an operad in groupoids $\CoPB$, the \textbf{operad of colored permutations and braids}. It is an operad relative over $\CoB$, the operad of colored braids \cite[§I.5]{Fresse2016a}.

Let $D^{+} = \{ z \in \mathbb{C} \mid \Im z \ge 0, |z| \leq 1 \}$ be the upper half disk, and let
\[ \Conf(n,m) = \{ (z_{1}, \dots, z_{n}, u_{1}, \dots, u_{m}) \in D^{+} \mid \Im z_{i} = 0, \Im u_{j} > 0, z_{i} \neq z_{j}, u_{i} \neq u_{j} \} \]
be the set of configurations of $n$ points on the real interval $[-1,1]$ and $m$ points in the upper half disk.

The disk-center mapping $\omega : \SC(n,m) \qiso \Conf(n,m)$, sending each disk to its center, is a weak equivalence~\cite{Voronov1999}. Let $\Sigma_{k}$ be the $k$th symmetric group, and let $\mathrm{Sh}_{n,m}$ be the set of $(n,m)$-shuffles:
\[ \mathrm{Sh}_{n,m} = \{ \mu \in \Sigma_{n+m} \mid \mu(1) < \dots < \mu(n), \mu(n+1) < \dots < \mu(n+m) \}. \]

For every $\mu \in \mathrm{Sh}_{n,m}$, we choose a configuration $c^{0}_{\mu} \in \Conf(n,m)$ with $n$ ``terrestrial'' points (on the real axis) and $m$ ``aerial'' points (with positive imaginary part), in the left-to-right order given by the $(n,m)$-shuffle $\mu$. For example we can choose:
\begin{equation}
\mu = (14|235) \in \mathrm{Sh}_{2,3} \leadsto c^{0}_{\mu} =
  \begin{gathered}
    \begin{tikzpicture}[scale = 0.4]
      \draw (5,0) arc (0:180:5); \draw (-5,0) -- (5,0);
      \bdot{-4,0}{1}; \bdot{-2,2}{1}; \bdot{0,2}{2}; \bdot{2,0}{2};
      \bdot{3.8,2}{3};
    \end{tikzpicture}
  \end{gathered}
  \in \Conf(2,3),
  \label{eq.exa-ob-copb}
\end{equation}
We consider the set:
\[ C^{0}(n,m) = \{ \sigma \cdot c^{0}_{\mu} \}_{\sigma \in \Sigma_{n} \times \Sigma_{m}, \, \mu \in \mathrm{Sh}_{n,m}} \subset \Conf(n,m), \]
where $\Sigma_{n} \times \Sigma_{m}$ acts by permuting labels.

\begin{example}
  For example, $C^{0}(2,1)$ can be chosen to be:
  \[ C^{0}(2,1) = \left\{
      \begin{gathered}
        \begin{tikzpicture}[scale=0.7]
          \foreach[count=\i] \ya/\la/\yb/\lb/\yc/\lc in
          { 0/1/1/1/1/2, 0/1/1/2/1/1, 1/1/0/1/1/2, 1/2/0/1/1/1, 1/1/1/2/0/1, 1/2/1/1/0/1 }
          {
            \begin{scope}[xshift={5cm*mod(\i-1,3)}, yshift={-3cm*int((\i-1)/3)} ]
              \draw (2,0) arc (0:180:2); \draw (-2,0) -- (2,0);
              \bdot{-1,\ya}{\la}; \bdot{0,\yb}{\lb}; \bdot{1,\yc}{\lc};
            \end{scope}
          }
        \end{tikzpicture}
      \end{gathered}
    \right\} \]
  The precise position of the points does not matter for our purposes, only their left-to-right order.
\end{example}

\begin{definition}
  The groupoid $\CoPB(n,m)$ is the restriction of the fundamental groupoid of $\Conf(n,m)$ to the set $C^{0}(n,m) \subset \Conf(n,m)$ (i.e.\ it is its full subcategory with these objects):
  \[ \CoPB(n,m) := \pi \Conf(n,m)_{\mid C^{0}(n,m)}. \]
\end{definition}

The set $\ob \CoPB(n,m) = C^{0}(n,m)$ is isomorphic to $\mathrm{Sh}_{n,m} \times \Sigma_{n} \times \Sigma_{m}$. We represent these objects by sequences of $n$ ``terrestrial'' points (drawn in white and labeled by $\{1, \dots, n\}$) and $m$ ``aerial'' points (drawn in black and labeled by $\{1,\dots,m\}$) on the interval $I = [-1,1]$; the order in which terrestrial and aerial points appear is given by the shuffle. For example, the element in \Cref{eq.exa-ob-copb} is represented by:
\[ \begin{tikzpicture}
    \draw[|-|] (0,0) -- (6,0);
    \wdot{1,0}{1};
    \bdot{2,0}{1};
    \bdot{3,0}{2};
    \wdot{4,0}{2};
    \bdot{5,0}{3};
  \end{tikzpicture}.
\]

Morphisms between two such configurations are given by isotopy classes of bicolored braids, where strands between terrestrial points never go behind any other strand, including other terrestrial strands (indeed, they represent paths in the interval $[-1,1]$, and points cannot move over one another in $\Conf_{n}([-1,1])$, nor can they go behind the paths in the open upper half disk). See \Cref{fig.exa-cobp} for an example of an element in $\CoPB(2,3)$, and \Cref{fig.comp-copb} for the corresponding path in $\Conf(2,3)$.

\begin{figure}[htbp]
  \begin{minipage}{0.45\linewidth}
    \centering
    \import{fig/}{example_copb.pdf_tex}
    \caption{Element in $\CoPB(2,3)$}
    \label{fig.exa-cobp}
  \end{minipage}
  \begin{minipage}{0.45\linewidth}
    \centering
    \import{fig/}{comp_copb_pid2.pdf_tex}
    \caption{Corresponding path in $\Conf(2,3)$}
    \label{fig.comp-copb}
  \end{minipage}
\end{figure}

To not confuse objects of $\CoPB$ and objects of $\CoB$, and to be coherent with the graphical representation of $\tMagma$ in \Cref{sec.magmas}, we draw the objects of $\CoB$ with ends in the shape of chevrons:
\[ \tikz{\draw[cob] (0,0) -- (1,0); \bdot{0.3,0}{1}; \bdot{0.7,0}{2};}
  \in \ob \CoB(2). \]

The symmetric sequence $\CoPB(n) = \{\CoPB(n,m)\}_{m \ge 0}$ is a right module over $\CoB$ by inserting a colored braid in a tubular neighborhood of an aerial strand.  (\Cref{fig.exa-struct-papb-rmod}). Similarly, the operad structure inserts a colored braid in a tubular neighborhood of a terrestrial strand (\Cref{fig.exa-struct-papb-op}). One can easily check that this gives a relative operad over $\CoB$ (in the same manner that one checks that $\CoB$ itself is an operad, cf.\ \cite[§I.5]{Fresse2016a}).

\begin{figure}[htbp]
  \centering
  \[\begin{gathered}
      \def\svgwidth{0.3\textwidth} \import{fig/}{example_copb.pdf_tex}
      \\
      \CoPB(2,3)
    \end{gathered}
    \circ_{1}^{\col{c}}
    \begin{gathered}
      \def\svgwidth{0.1\textwidth} \import{fig/}{exa_rmod_1.pdf_tex}
      \\
      \CoB(2)
    \end{gathered}
    =
    \begin{gathered}
      \def\svgwidth{0.3\textwidth} \import{fig/}{exa_rmod_2.pdf_tex}
      \\
      \CoPB(2,4)
    \end{gathered}
  \]
  \caption{Definition of the right $\CoB$-module structure}
  \label{fig.exa-struct-papb-rmod}
\end{figure}

\begin{figure}[htbp]
  \centering
  \[\begin{gathered}
      \def\svgwidth{0.3\textwidth} \import{fig/}{example_copb.pdf_tex}
      \\
      \CoPB(2,3)
    \end{gathered}
    \circ_{1}^{\col{o}}
    \begin{gathered}
      \def\svgwidth{0.1\textwidth} \import{fig/}{exa_op_1.pdf_tex} \\
      \CoPB(1,1)
    \end{gathered}
    =
    \begin{gathered}
      \def\svgwidth{0.3\textwidth} \import{fig/}{exa_op_2.pdf_tex} \\
      \CoPB(2,4)
    \end{gathered}
  \]
  \caption{Definition of the operad structure}
  \label{fig.exa-struct-papb-op}
\end{figure}

\subsection{Magmas}
\label{sec.magmas}

\begin{definition}
  Let $\magma$ be the \textbf{magma operad} (as in \cite{Fresse2016a}), defined as the free symmetric operad $\mathbb{O}(\mu_{\col{c}})$ on a single generator $\mu_{\col{c}} = \mu_{\col{c}}(x_{1}, x_{2})$ of arity $2$ (where $\Sigma_{2}$ acts freely on $\mu_{\col{c}}$). Its algebras are sets endowed with a product satisfying no further conditions.
\end{definition}

Elements of $\magma(n)$ are parenthesizations of a permutation of $n$ elements, for example $(((x_{1}x_{3})(x_{2}x_{4}))x_{5}) \in \Omega(5)$. The index $\col{c}$ of $\mu_{\col{c}}$ is there to be coherent with the following definition:

\begin{definition}
  Let $\tMagma = \mathbb{O}(\mu_{\col{c}}, f, \mu_{\col{o}})$ be the free colored operad on the three generators $\mu_{\col{c}} \in \tMagma(\col{c}, \col{c}; \col{c})$, $f \in \tMagma(\col{c}; \col{o})$ et $\mu_{\col{o}} \in \tMagma(\col{o}, \col{o}; \col{o})$. It is a relative operad over $\magma$.
\end{definition}
An algebra over $\tMagma$ is the data of two magmas $M$, $N$, and of a mere function $f : M \to N$ (not necessarily preserving the product).

\begin{lemma}
  The suboperad of $\SC$ generated by the following three elements is free on those generators:
  \begin{align*}
  \mu_{\col{c}}
  & =
    \begin{gathered} \begin{tikzpicture}[scale = 0.5]
      \draw (0,0) circle (2);
      \draw (-1,0) circle (1); \node at (-1,0) {$1$};
      \draw (1,0) circle (1); \node at (1,0) {$2$};
    \end{tikzpicture} \end{gathered},
  & \mu_{\col{o}}
  & =
    \begin{gathered} \begin{tikzpicture}[scale = 0.5]
        \draw (2,0) arc (0:180:2);
        \draw (0,0) arc (0:180:1); \node at (-1,0.5) {$1$};
        \draw (2,0) arc (0:180:1); \node at (1,0.5) {$2$};
        \draw (-2,0) -- (2,0);
      \end{tikzpicture} \end{gathered},
  & f
  & =
    \begin{gathered} \begin{tikzpicture}[scale = 0.5]
        \draw (2,0) arc (0:180:2);
        \draw (0,1) circle (1);
        \node at (0,1) {$1$};
        \draw (-2,0) -- (2,0);
      \end{tikzpicture} \end{gathered}.
  \end{align*}
\end{lemma}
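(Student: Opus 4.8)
The plan is to consider the morphism of coloured operads $\Phi \colon \tMagma \to \SC$ determined by sending the generators $\mu_{\col{c}}$, $f$ and $\mu_{\col{o}}$ to the three displayed configurations; it exists by the universal property of the free operad $\tMagma$ (there is no obstruction, since $\Sigma_{2}$ acts freely on the little‑disks configurations assigned to $\mu_{\col{c}}$ and $\mu_{\col{o}}$). By construction its image is exactly the suboperad of $\SC$ generated by those three elements, so the lemma is equivalent to the injectivity of $\Phi$. Recall that an element of $\tMagma(n,m)$ is a ``shuffle tree'': a planar binary tree with all internal vertices labelled by $\mu_{\col{o}}$, whose leaves are each either an open input (labelled in $\{1,\dots,n\}$) or an $f$‑node beneath which hangs a nonempty binary $\mu_{\col{c}}$‑tree (an element of $\magma$) with closed leaves (labelled in $\{1,\dots,m\}$) -- the unary closed‑to‑open generator $f$ can only occur in this position. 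Thus it suffices to show that the configuration $\Phi(T) \in \SC(n,m)$ determines the tree $T$.

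I would reconstruct $T$ from $\Phi(T)$ by induction on the number of vertices, using the recursive structure of operadic composition of little disks. The $m$ full disks of $\Phi(T)$ fall into \emph{clusters}: two full disks belong to the same cluster when they were inserted, through the same $f$, into a common half‑disk with no other half‑disk separating them. Inside a cluster the full disks are nested according to a parenthesized permutation, and this element of $\magma$ is recovered from the nesting pattern and the left‑to‑right order of the disks exactly as in the classical fact that the suboperad of $\DD_{2}$ generated by $\mu_{\col{c}}$ is free on it (compare the discussion of the magma operad in \cite{Fresse2016a}). Contracting each cluster to a single full disk turns $\Phi(T)$ into a configuration of $n$ terrestrial half‑disks and $k$ solo full disks, i.e.\ into an element of the suboperad of $\SC$ generated by $\mu_{\col{o}}$ and $f$ alone; forgetting that the $f$‑disks lie above the diameter and retaining only the sub‑interval of $[-1,1]$ that each of them spans identifies this configuration with a system of $n+k$ nested labelled intervals, marked by type, from which one reads off the outer $\mu_{\col{o}}$‑tree together with the placement of the $f$‑nodes, again just as for nested little intervals. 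Reassembling the outer tree with the $\magma$‑operations found inside the clusters recovers $T$, and the reconstruction is manifestly inverse to $\Phi$.

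The crux is to make the notion of cluster intrinsic: one must verify that, in every composite of the three generators, full disks coming from distinct $f$'s are always separated either by a terrestrial half‑disk or by the left/right split produced by some $\mu_{\col{o}}$, whereas full disks coming from the same $f$ are never so separated, and that contracting clusters is compatible with operadic composition -- this is what makes the reconstruction well defined. Granting this, the two monochromatic ingredients (freeness of the suboperad of $\DD_{2}$ generated by $\mu_{\col{c}}$, and the analogous statement for nested little intervals and half‑disks used for the outer tree) are the classical freeness statements for the magma operad, and the induction on tree size then goes through routinely.
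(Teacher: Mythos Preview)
Your approach is essentially the same as the paper's: set up $\Phi:\tMagma\to\SC$, reduce to injectivity, and build a set-level retraction by first isolating, inside a composite configuration, the groups of full disks that came from a common $f$, then invoking the classical freeness of the magma operad inside $\DD_{2}$ (resp.\ $\DD_{1}$) to reconstruct the inner $\mu_{\col{c}}$-trees and the outer $\mu_{\col{o}}$-tree.

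The one point where the paper is sharper than your sketch is exactly the ``crux'' you flag: you define clusters extrinsically (``inserted through the same $f$'') and then note that this must be shown to be intrinsic. The paper sidesteps this by using \emph{connected components} of the union of full disks as the intrinsic definition. This works because the chosen generators are all \emph{tangent} configurations: $\mu_{\col{c}}$ places two full disks tangent to each other, $f$ places a single full disk tangent to the diameter, and $\mu_{\col{o}}$ places two half-disks tangent at the midpoint. Consequently all full disks born from a single $f(v)$ with $v\in\magma$ form one connected blob, while full disks coming from distinct $f$'s sit in distinct half-disks and can never touch. So ``cluster $=$ connected component'' and there is nothing further to verify. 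For the outer tree, rather than your ``sub-interval spanned by a contracted cluster'', the paper projects the midpoint of each connected component to the real axis and combines these with the centers of the $n$ half-disks to obtain a dyadic configuration of $n+r$ points on the diameter, to which the cited result of Fresse applies directly; this avoids having to say precisely what disk a contracted cluster becomes.
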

\begin{proof}
  We would like to show that the induced morphism $i : \tMagma \to \SC$, sending the three generators of $\tMagma$ to the elements depicted in the lemma, is an embedding, i.e.\ an isomorphism onto its image. The image of this induced morphism is by definition the suboperad generated by the three elements, hence the lemma.

  The fact that the suboperad of $\DD_{2}$ generated by $\mu_{\col{c}}$ is free is given by \cite[Proposition I.6.2.2(a)]{Fresse2016a}. Let $\alpha \in \SC(n,m)$ be a configuration, as in \Cref{fig.exa-magma}. We will build an element of $\tMagma$ which is sent to $\alpha$ under $i$. This set-level retraction is not necessarily a morphism of operads but still shows that $i$ is injective, which will prove the lemma.

  \begin{figure}[htbp]
    \centering
    \def\svgwidth{0.8\textwidth}
    \import{fig/}{free_operad.pdf_tex}
    \caption{Example element of $\tMagma(1,3)$}
    \label{fig.exa-magma}
  \end{figure}

  We first regroup the $m$ full disks into connected components $C_{1}, \dots, C_{r}$. For each $C_{i}$, we consider the center of the middle horizontal interval (in blue on the figure), which we project onto the real line (in red). These points, together with the centers of the $n$ half disks, make up a dyadic configuration on the horizontal diameter of the ambient half disk. By \cite[Proposition I.6.2.2(a)]{Fresse2016a}, such a dyadic configuration is equivalent to an element $u \in \magma(n+r)$ (which we see as an iterate of $\mu_{\col{c}}$).

  For each $C_{i}$ (corresponding to an input of $u$), we apply the same proposition \cite[Proposition I.6.2.2(a)]{Fresse2016a} to get an element $v_{i} \in \magma(k_{i})$ (which we see as a iterate of $\mu_{\col{o}}$), and $\sum k_{i} = m$. If we plug $f(v_{i})$ in the corresponding inputs of $u$, we get an element of $\tMagma(n,m)$, and by construction this elements gets sent to $\alpha$ by $i$.
\end{proof}

We consider the following graphical representation for elements of $\tMagma$ with open output, where the generators are represented as follows:
\begin{align*}
  \mu_{\col{c}}
  & \leadsto \tikz{\draw[cob] (0,0) -- (2,0); \draw[|-] (1,0) -- (2,0);
    \bdot{0.5,0}{1}; \bdot{1.5,0}{2};},
  & \mu_{\col o}
  & \leadsto \tikz{\draw[|-|] (0,0) -- (2,0); \draw[|-] (1,0) -- (2,0);
    \wdot{0.5,0}{1}; \wdot{1.5,0}{2};},
  & f
  & \leadsto \tikz{\draw[|-|] (0,0) -- (1,0); \draw[cob] (0.1,0) --
    (0.9,0); \bdot{0.5,0}{1};}.
\end{align*}

For example, this is the representation of the element of \Cref{fig.exa-magma}:
\[ \mu_{\col{o}}(f(\mu_{\col{c}}(x_{1}, x_{2})), \mu_{\col{o}}(f(x_{3}), y_{1}))
  \leadsto
  \begin{tikzpicture}
      \draw[|-|] (0,0) -- (4,0);
      \draw[|-|] (1,0) -- (2,0);
      \draw[cob] (0.1,0) -- (1.9,0);
      \bdot{0.5,0}{1}; \bdot{1.5,0}{2};
      \draw[|-] (3,0) -- (4,0);
      \draw[cob] (2.1,0) -- (2.9,0);
      \bdot{2.5,0}{3};
      \wdot{3.5,0}{1};
    \end{tikzpicture} \in \tMagma(1,3)
\]
Each $\mu_{\col{o}}$ is represented by cutting in half the interval; $f$ is represented by parentheses, and $\mu_{\col{c}}$ is again represented by cutting the interval inside the parentheses in half. Closed inputs are represented by black points, while open inputs are represented by white points.

\begin{remark}
  The parentheses separating the aerial points are really necessary in the representation. For example these are two different objects:
  \begin{gather*}
    f(\mu_{\col c}(x_{1},x_{2})) =
    \begin{tikzpicture}
      \draw[|-|] (0,0) -- (2,0); \draw[cob] (0.1,0) -- (1.9,0);
      \draw[|-] (1,0) -- (2,0);
      \bdot{0.5,0}{1}; \bdot{1.5,0}{2};
    \end{tikzpicture}
    =
    \begin{gathered}
      \begin{tikzpicture}[scale=0.4]
        \draw[dashed] (0,0) circle (2);
        \draw (-1,0) circle (1); \node at (-1,0) {$1$};
        \draw (1,0) circle (1); \node at (1,0) {$2$};
        \draw (-4,-2) -- (4,-2); \draw (4,-2) arc (0:180:4);
      \end{tikzpicture}
    \end{gathered}
    \\
    \mu_{\col o}(f(x_{1}), f(x_{2}))
    =
    \begin{tikzpicture}
      \draw[|-|] (0,0) -- (2,0);
      \draw[cob] (0.1,0) -- (0.9,0);  \draw[cob] (1.1,0) -- (1.9,0);
      \draw[|-] (1,0) -- (2,0);
      \bdot{0.5,0}{1}; \bdot{1.5,0}{2};
    \end{tikzpicture}
    =
    \begin{gathered}
      \begin{tikzpicture}[scale=0.8]
        \draw (2,0) arc (0:180:2);
        \draw[dashed] (0,0) arc (0:180:1); \draw (-1,0.5) circle (0.5);
        \node at (-1,0.5) {$1$};
        \draw[dashed] (2,0) arc (0:180:1); \draw (1,0.5) circle (0.5);
        \node at (1,0.5) {$2$};
        \draw (-2,0) -- (2,0);
      \end{tikzpicture}
    \end{gathered}
  \end{gather*}
\end{remark}

\subsection{Parenthesized version}
\label{sec.parenth-vers}

We will now define $\PaPB$, the \textbf{operad of parenthesized permutations and braids}, a relative operad over $\PaB$. The definition of $\PaPB$ is given as a pullback of $\CoPB$, similarly to how $\PaB$ is a pullback of $\CoB$.

\begin{definition}\label{def.papb}
  We consider the morphism $\omega : \tMagma \to \ob \CoPB$, given on generators by:
  \begin{align*}
    \mu_{\col{c}} & \mapsto \tikz{\draw[cob] (0,0) -- (1,0); \bdot{0.3,0}{1}; \bdot{0.7,0}{2};},
    & f & \mapsto \tikz{\draw[|-|] (0,0) -- (1,0); \bdot{0.5,0}{1};},
    & \mu_{\col{o}} & \mapsto \tikz{\draw[|-|] (0,0) -- (1,0); \wdot{0.3,0}{1}; \wdot{0.7,0}{2};},
  \end{align*}
  and we define $\PaPB := \omega^{*} \CoPB$, the pullback of $\CoPB$ along $\omega$. It is an operad in groupoids such that $\ob \PaPB = \tMagma$ and
  \[ \Hom_{\PaPB(n,m)}(u,v) := \Hom_{\CoPB(n,m)}(\omega(u), \omega(v)) \]
  for $u,v \in \tMagma(n,m)$.
\end{definition}

\begin{definition}
  A \textbf{categorical equivalence} is a morphism of operads in groupoids which is an equivalence of categories in each arity. Two operads $\PP$ and $\QQ$ are said to be \textbf{categorically equivalent} (and we write $\PP \simeq \QQ$) if they can be connected by a zigzag of categorical equivalences:
  \[ \PP \qiso* \cdot \qiso \dots \qiso* \cdot \qiso \QQ. \]
\end{definition}

Recall that the fundamental groupoid functor $\pi : (\mathsf{Top}, \times) \to (\mathsf{Gpd}, \times)$ is monoidal, thus the fundamental groupoid of a topological operad is an operad in groupoids.

\begin{remark}
  Since each arity of the operad $\SC$ has homotopy concentrated in degrees $\le 1$, it follows that its fundamental groupoid is enough to recover the homotopy type of the operad through the classifying space construction: $\SC \qiso \operatorname{B} \pi \SC$.
\end{remark}

\begin{theorem} \label{thm.model-sc} The operad $\PaPB$ is isomorphic to the fundamental groupoid of $\SC$ restricted to the image of $\tMagma \hookrightarrow \SC$, and we get a zigzag of categorical equivalences:
  \[ \pi \SC \qiso* \pi \SC_{\mid \tMagma} \cong \PaPB \qiso \CoPB. \]
\end{theorem}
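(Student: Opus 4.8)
The plan is to prove the three relations in the zigzag separately, working from right to left. The isomorphism $\PaPB \cong \pi\SC_{\mid\tMagma}$ is essentially a matter of unwinding definitions: by \Cref{def.papb}, $\ob\PaPB = \tMagma$ with morphism sets pulled back from $\CoPB$ along $\omega$, while $\pi\SC_{\mid\tMagma}$ is the restriction of the fundamental groupoid of $\SC$ to the image of $i : \tMagma \hookrightarrow \SC$ (the embedding from the lemma). First I would invoke the disk-center weak equivalence $\omega : \SC(n,m) \qiso \Conf(n,m)$ of Voronov, which induces an isomorphism on fundamental groupoids, and observe that under this identification the objects $i(u)$ for $u \in \tMagma(n,m)$ land in the same path-components as the chosen configurations $c^{0}_{\mu}$, in fact in the subset $C^{0}(n,m) = \ob\CoPB(n,m)$ once we record only the left-to-right order of terrestrial and aerial points. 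Thus $\pi\SC_{\mid\tMagma} \cong \pi\Conf(n,m)_{\mid C^{0}(n,m)} = \CoPB(n,m)$ on the level of morphisms, and the composite identification of objects with $\tMagma$ is exactly $\omega$; this gives simultaneously both $\pi\SC_{\mid\tMagma} \cong \PaPB$ and the categorical equivalence $\PaPB \qiso \CoPB$ (the latter is the pullback projection, which is fully faithful by construction and essentially surjective because every object of $C^{0}(n,m)$ is the $\omega$-image of a suitable iterate of the generators, by the same dyadic-configuration argument used in the proof of the lemma). One must also check that these identifications are compatible with the relative operad structures over $\DD_2$, $\CoB$, $\PaB$ respectively — i.e. that inserting configurations in tubular neighborhoods of terrestrial and aerial strands matches operadic composition in $\SC$ — which is the routine bookkeeping already alluded to for $\CoB$ in \cite[§I.5]{Fresse2016a}.

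The genuinely substantive part is the categorical equivalence $\pi\SC \qiso* \pi\SC_{\mid\tMagma}$, i.e. showing that the inclusion of the full sub-operad-in-groupoids on the objects coming from $\tMagma$ is a categorical equivalence. Fully faithfulness is automatic for a full subcategory; the content is essential surjectivity in each arity $(n,m)$: every configuration in $\SC(n,m)$ must be connected by a path to some $i(u)$ with $u \in \tMagma(n,m)$. The hard part will be establishing this essential surjectivity compatibly enough that it assembles into a morphism of operads in the zigzag. Here I would reuse the set-level retraction constructed in the proof of the lemma: given $\alpha \in \SC(n,m)$, regroup the $m$ full disks into connected components, project the component-intervals and the half-disk centers onto the real diameter to obtain a dyadic configuration, apply \cite[Proposition I.6.2.2(a)]{Fresse2016a} inside each component and on the diameter to extract iterates of $\mu_{\col{c}}$, $\mu_{\col{o}}$ and the placements of $f$, and thereby produce $u \in \tMagma(n,m)$ together with an explicit path $\alpha \rightsquigarrow i(u)$ (the path that slides each disk radially/continuously to its dyadic normal form). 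Essential surjectivity follows. For the operad compatibility one notes that $\pi\SC$ and $\pi\SC_{\mid\tMagma}$ share the same morphism groupoids between objects of $\tMagma$ and that $\tMagma$ is closed under operadic composition inside $\SC$ (it is a sub-operad, being the image of $i$), so the inclusion is a genuine morphism of operads in groupoids; combined with arity-wise essential surjectivity it is a categorical equivalence.

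Assembling: the two outer maps of the displayed zigzag are categorical equivalences by the above, and the middle map is an isomorphism, so $\pi\SC \simeq \PaPB \simeq \CoPB$. I expect the only real obstacle to be checking that the retraction-induced paths can be chosen coherently with respect to insertion of disks, so that essential surjectivity is witnessed by something compatible with composition; this should follow because the dyadic-normalization procedure is itself built operadically (component by component, then along the diameter), mirroring the relative operad structure, exactly as in Fresse's treatment of $\PaB$ versus $\CoB$.
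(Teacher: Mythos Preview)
Your proposal is correct and follows essentially the same route as the paper, but you make the essential surjectivity step for $\pi\SC_{\mid\tMagma} \hookrightarrow \pi\SC$ far harder than it is, and you introduce a concern that is not actually there.

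Essential surjectivity of a full sub-groupoid means exactly that the subset of objects meets every path component. The paper simply observes that $\SC(n,m) \simeq \Sigma_n \times \DD_2(m)$, so the components of $\SC(n,m)$ are indexed by the left-to-right order of the terrestrial half-disks together with the labelling of the aerial disks, and it is immediate that $\tMagma(n,m)$ hits all of them. No explicit paths, no retraction, no dyadic normal form are needed here. Your reuse of the retraction from the lemma would of course produce such a path, but it is overkill.

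More importantly, your final worry --- that the ``retraction-induced paths'' must be chosen coherently with operadic insertion --- is a misconception. A categorical equivalence of operads in groupoids is, by definition, a morphism of operads that is arity-wise an equivalence of categories. The inclusion $\pi\SC_{\mid\tMagma} \hookrightarrow \pi\SC$ is already a morphism of operads because $\tMagma$ is a sub-operad of $\ob\pi\SC$ (you say this yourself). Once that morphism is fully faithful and essentially surjective in each arity, you are done; there is no requirement that an inverse equivalence, or the isomorphisms witnessing essential surjectivity, be compatible with composition. So the ``only real obstacle'' you anticipate does not exist.
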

\begin{proof}
  The proof of the first part of the proposition is a direct adaptation of the proof of \cite[Proposition I.6.2.2(b)]{Fresse2016a}. We note that $\tMagma \subset \ob \pi \SC$ is a suboperad, thus $\pi \SC_{\mid \tMagma}$ is also a suboperad of $\pi \SC$. For the second part, we note that $\tMagma(n,m)$ meets all the connected components of $\SC(n,m) \sim \Sigma_{n} \times \DD_{2}(m)$, so the first inclusion is a categorical equivalence. Since $\omega : \tMagma \to \ob \CoPB$ is surjective, the second morphism is also a categorical equivalence.
\end{proof}

\section{Drinfeld center}
\label{sec.drinfeld-center}

\subsection{Algebras over \texorpdfstring{$\PaPB$}{PaPB}}
\label{sec.algebras-over-papb}

\begin{definition}
  Let $\cC$ be a (non-unitary) monoidal category. Its suspension $\Sigma \cC$ is a bicategory with a single object. The \textbf{Drinfeld center}~\cite{Majid1991, JoyalStreet1991} of $\cC$ is the braided monoidal category $\ZZ(\cC) = \operatorname{End}(\id_{\Sigma \cC})$. Explicitly, it is given as follows:
  \begin{itemize}
  \item Objects are pairs $(X, \Psi)$, where $X$ is an object of $\cC$ and $\Psi : (X \otimes -) \to (- \otimes X)$ is a \textbf{half-braiding}, i.e.\ a natural isomorphism such that for all $Y,Z \in \cC$ the following diagram commutes:
    \[\begin{tikzcd}[row sep = tiny]
        {} & X \otimes (Y \otimes Z) \rar{\Psi_{Y \otimes Z}} & (Y
        \otimes Z) \otimes X \drar{\alpha} \\
        (X \otimes Y) \otimes Z \urar{\alpha} \drar[swap]{\Psi_{Y}
          \otimes 1} & {} & {} & Y \otimes (Z \otimes X) \\
        {} & (Y \otimes X) \otimes Z \rar[swap]{\alpha} & Y \otimes (X
        \otimes Z) \urar[swap]{1 \otimes \Psi_{Z}}
      \end{tikzcd}\]
  \item Morphisms between $(X, \Psi)$ and $(Y, \Psi')$ are morphisms $f : X \to Y$ of $\cC$ such that, for all $Z \in \cC$, the following diagram commutes
    \[\begin{tikzcd}
        X \otimes Z \rar{f \otimes 1} \dar[swap]{\Psi_{Z}}
        & Y \otimes Z \dar{\Psi'_{Z}} \\
        Z \otimes X \rar[swap]{1 \otimes f}
        & Z \otimes Y
      \end{tikzcd}\]
  \item The tensor product of two objects $(X, \Psi) \otimes (X', \Psi')$ is given by $(X \otimes X', \Psi'')$, where $\Psi''_{Z}$ is defined by the following diagram (that can be rearranged as an hexagon by inverting both vertical $\alpha$'s, see \Cref{fig.f-monoid-center}):
    \[\begin{tikzcd}
        (X \otimes X') \otimes Y \ar{rrr}{\Psi''_{Y}}
        \dar[swap]{\alpha}
        &&& Y \otimes (X \otimes X') \\
        X \otimes (X' \otimes Y) \rar[swap]{1 \otimes \Psi'_{Y}} & X
        \otimes (Y \otimes X') \rar[swap]{\alpha^{-1}} & (X \otimes Y)
        \otimes X' \rar[swap]{\Psi_{Y} \otimes 1} & (Y \otimes X)
        \otimes X' \uar[swap]{\alpha}
      \end{tikzcd}\]
  \item The braiding $(X, \Psi) \otimes (X', \Psi') \to (X', \Psi') \otimes (X, \Psi)$ is given by $\Psi_{X'}$ and the associator is given by the associator of $\cC$.
  \end{itemize}
\end{definition}

We consider the following elements of $\PaPB$:
\begin{center}
  \begin{tabular}{cccc}
    \toprule
    $\mu_{\col{c}} \in \ob \PaB(2)$ & $\mu_{\col{o}} \in \ob \PaPB(2,0)$ & $f \in \ob \PaPB(0,1)$ & $\tau \in \PaB(2)$
    \\ \midrule
    \tikz[baseline=-1cm]{\draw[<-|] (0,0) -- (0.5,0);
    \draw[->] (0.5,0) -- (1,0);
    \bdot{0.25,0}{1};
    \bdot{0.75,0}{2};}
    & \tikz[baseline=-1cm]{\draw[|-|] (0,0) -- (0.5,0); \draw[-|] (0.5,0) --
      (1,0); \wdot{0.25,0}{1}; \wdot{0.75,0}{2};}
    & \tikz[baseline=-1cm]{\draw[|-|] (0,0) -- (1,0); \draw[cob] (0.1,0) -- (0.9,0);
      \bdot{0.5,0}{1};}
    & \def\svgwidth{0.12\textwidth}
      %% Creator: Inkscape 0.91_64bit, www.inkscape.org
%% PDF/EPS/PS + LaTeX output extension by Johan Engelen, 2010
%% Accompanies image file 'tau.pdf' (pdf, eps, ps)
%%
%% To include the image in your LaTeX document, write
%%   \input{<filename>.pdf_tex}
%%  instead of
%%   \includegraphics{<filename>.pdf}
%% To scale the image, write
%%   \def\svgwidth{<desired width>}
%%   \input{<filename>.pdf_tex}
%%  instead of
%%   \includegraphics[width=<desired width>]{<filename>.pdf}
%%
%% Images with a different path to the parent latex file can
%% be accessed with the `import' package (which may need to be
%% installed) using
%%   \usepackage{import}
%% in the preamble, and then including the image with
%%   \import{<path to file>}{<filename>.pdf_tex}
%% Alternatively, one can specify
%%   \graphicspath{{<path to file>/}}
%% 
%% For more information, please see info/svg-inkscape on CTAN:
%%   http://tug.ctan.org/tex-archive/info/svg-inkscape
%%
\begingroup%
  \makeatletter%
  \providecommand\color[2][]{%
    \errmessage{(Inkscape) Color is used for the text in Inkscape, but the package 'color.sty' is not loaded}%
    \renewcommand\color[2][]{}%
  }%
  \providecommand\transparent[1]{%
    \errmessage{(Inkscape) Transparency is used (non-zero) for the text in Inkscape, but the package 'transparent.sty' is not loaded}%
    \renewcommand\transparent[1]{}%
  }%
  \providecommand\rotatebox[2]{#2}%
  \ifx\svgwidth\undefined%
    \setlength{\unitlength}{70.250819bp}%
    \ifx\svgscale\undefined%
      \relax%
    \else%
      \setlength{\unitlength}{\unitlength * \real{\svgscale}}%
    \fi%
  \else%
    \setlength{\unitlength}{\svgwidth}%
  \fi%
  \global\let\svgwidth\undefined%
  \global\let\svgscale\undefined%
  \makeatother%
  \begin{picture}(1,1.200916)%
    \put(0,0){\includegraphics[width=\unitlength,page=1]{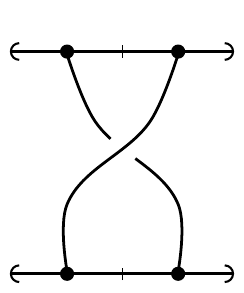}}%
    \put(0.214745,1.084049){\color[rgb]{0,0,0}\makebox(0,0)[lb]{\smash{$1$}}}%
    \put(0.661715,1.079982){\color[rgb]{0,0,0}\makebox(0,0)[lb]{\smash{$2$}}}%
  \end{picture}%
\endgroup%

    \\ \midrule
    $p \in \PaPB(0,2)$ & $\psi \in \PaPB(1,1)$ & $\alpha_{\col{c}} \in \PaB(3)$ & $\alpha_{\col{o}} \in \PaPB(3,0)$
    \\ \midrule
    \def\svgwidth{0.12\textwidth}
      %% Creator: Inkscape 0.91_64bit, www.inkscape.org
%% PDF/EPS/PS + LaTeX output extension by Johan Engelen, 2010
%% Accompanies image file 'phi.pdf' (pdf, eps, ps)
%%
%% To include the image in your LaTeX document, write
%%   \input{<filename>.pdf_tex}
%%  instead of
%%   \includegraphics{<filename>.pdf}
%% To scale the image, write
%%   \def\svgwidth{<desired width>}
%%   \input{<filename>.pdf_tex}
%%  instead of
%%   \includegraphics[width=<desired width>]{<filename>.pdf}
%%
%% Images with a different path to the parent latex file can
%% be accessed with the `import' package (which may need to be
%% installed) using
%%   \usepackage{import}
%% in the preamble, and then including the image with
%%   \import{<path to file>}{<filename>.pdf_tex}
%% Alternatively, one can specify
%%   \graphicspath{{<path to file>/}}
%% 
%% For more information, please see info/svg-inkscape on CTAN:
%%   http://tug.ctan.org/tex-archive/info/svg-inkscape
%%
\begingroup%
  \makeatletter%
  \providecommand\color[2][]{%
    \errmessage{(Inkscape) Color is used for the text in Inkscape, but the package 'color.sty' is not loaded}%
    \renewcommand\color[2][]{}%
  }%
  \providecommand\transparent[1]{%
    \errmessage{(Inkscape) Transparency is used (non-zero) for the text in Inkscape, but the package 'transparent.sty' is not loaded}%
    \renewcommand\transparent[1]{}%
  }%
  \providecommand\rotatebox[2]{#2}%
  \ifx\svgwidth\undefined%
    \setlength{\unitlength}{70.469291bp}%
    \ifx\svgscale\undefined%
      \relax%
    \else%
      \setlength{\unitlength}{\unitlength * \real{\svgscale}}%
    \fi%
  \else%
    \setlength{\unitlength}{\svgwidth}%
  \fi%
  \global\let\svgwidth\undefined%
  \global\let\svgscale\undefined%
  \makeatother%
  \begin{picture}(1,1.180401)%
    \put(0,0){\includegraphics[width=\unitlength,page=1]{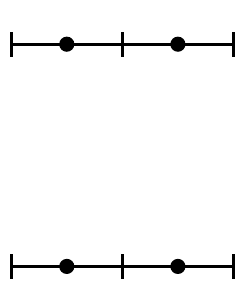}}%
    \put(0.217391,1.063895){\color[rgb]{0,0,0}\makebox(0,0)[lb]{\smash{$1$}}}%
    \put(0.677454,1.061029){\color[rgb]{0,0,0}\makebox(0,0)[lb]{\smash{$2$}}}%
    \put(0,0){\includegraphics[width=\unitlength,page=2]{phi.pdf}}%
  \end{picture}%
\endgroup%

    & \def\svgwidth{0.12\textwidth}
      %% Creator: Inkscape 0.91_64bit, www.inkscape.org
%% PDF/EPS/PS + LaTeX output extension by Johan Engelen, 2010
%% Accompanies image file 'psi.pdf' (pdf, eps, ps)
%%
%% To include the image in your LaTeX document, write
%%   \input{<filename>.pdf_tex}
%%  instead of
%%   \includegraphics{<filename>.pdf}
%% To scale the image, write
%%   \def\svgwidth{<desired width>}
%%   \input{<filename>.pdf_tex}
%%  instead of
%%   \includegraphics[width=<desired width>]{<filename>.pdf}
%%
%% Images with a different path to the parent latex file can
%% be accessed with the `import' package (which may need to be
%% installed) using
%%   \usepackage{import}
%% in the preamble, and then including the image with
%%   \import{<path to file>}{<filename>.pdf_tex}
%% Alternatively, one can specify
%%   \graphicspath{{<path to file>/}}
%% 
%% For more information, please see info/svg-inkscape on CTAN:
%%   http://tug.ctan.org/tex-archive/info/svg-inkscape
%%
\begingroup%
  \makeatletter%
  \providecommand\color[2][]{%
    \errmessage{(Inkscape) Color is used for the text in Inkscape, but the package 'color.sty' is not loaded}%
    \renewcommand\color[2][]{}%
  }%
  \providecommand\transparent[1]{%
    \errmessage{(Inkscape) Transparency is used (non-zero) for the text in Inkscape, but the package 'transparent.sty' is not loaded}%
    \renewcommand\transparent[1]{}%
  }%
  \providecommand\rotatebox[2]{#2}%
  \ifx\svgwidth\undefined%
    \setlength{\unitlength}{70.469291bp}%
    \ifx\svgscale\undefined%
      \relax%
    \else%
      \setlength{\unitlength}{\unitlength * \real{\svgscale}}%
    \fi%
  \else%
    \setlength{\unitlength}{\svgwidth}%
  \fi%
  \global\let\svgwidth\undefined%
  \global\let\svgscale\undefined%
  \makeatother%
  \begin{picture}(1,1.198669)%
    \put(0,0){\includegraphics[width=\unitlength,page=1]{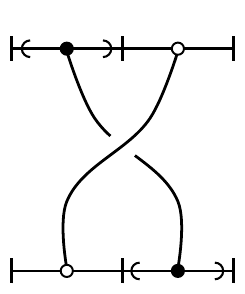}}%
    \put(0.220648,1.080137){\color[rgb]{0,0,0}\makebox(0,0)[lb]{\smash{$1$}}}%
    \put(0.670692,1.082164){\color[rgb]{0,0,0}\makebox(0,0)[lb]{\smash{$2$}}}%
  \end{picture}%
\endgroup%

    & \def\svgwidth{0.25\textwidth}
      %% Creator: Inkscape 0.91_64bit, www.inkscape.org
%% PDF/EPS/PS + LaTeX output extension by Johan Engelen, 2010
%% Accompanies image file 'alpha_c.pdf' (pdf, eps, ps)
%%
%% To include the image in your LaTeX document, write
%%   \input{<filename>.pdf_tex}
%%  instead of
%%   \includegraphics{<filename>.pdf}
%% To scale the image, write
%%   \def\svgwidth{<desired width>}
%%   \input{<filename>.pdf_tex}
%%  instead of
%%   \includegraphics[width=<desired width>]{<filename>.pdf}
%%
%% Images with a different path to the parent latex file can
%% be accessed with the `import' package (which may need to be
%% installed) using
%%   \usepackage{import}
%% in the preamble, and then including the image with
%%   \import{<path to file>}{<filename>.pdf_tex}
%% Alternatively, one can specify
%%   \graphicspath{{<path to file>/}}
%% 
%% For more information, please see info/svg-inkscape on CTAN:
%%   http://tug.ctan.org/tex-archive/info/svg-inkscape
%%
\begingroup%
  \makeatletter%
  \providecommand\color[2][]{%
    \errmessage{(Inkscape) Color is used for the text in Inkscape, but the package 'color.sty' is not loaded}%
    \renewcommand\color[2][]{}%
  }%
  \providecommand\transparent[1]{%
    \errmessage{(Inkscape) Transparency is used (non-zero) for the text in Inkscape, but the package 'transparent.sty' is not loaded}%
    \renewcommand\transparent[1]{}%
  }%
  \providecommand\rotatebox[2]{#2}%
  \ifx\svgwidth\undefined%
    \setlength{\unitlength}{102.250819bp}%
    \ifx\svgscale\undefined%
      \relax%
    \else%
      \setlength{\unitlength}{\unitlength * \real{\svgscale}}%
    \fi%
  \else%
    \setlength{\unitlength}{\svgwidth}%
  \fi%
  \global\let\svgwidth\undefined%
  \global\let\svgscale\undefined%
  \makeatother%
  \begin{picture}(1,0.536339)%
    \put(0,0){\includegraphics[width=\unitlength,page=1]{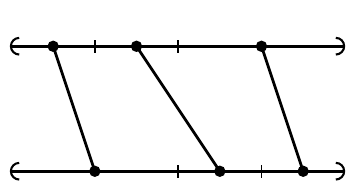}}%
    \put(0.13171,0.456046){\color[rgb]{0,0,0}\makebox(0,0)[lb]{\smash{$1$}}}%
    \put(0.367824,0.456046){\color[rgb]{0,0,0}\makebox(0,0)[lb]{\smash{$2$}}}%
    \put(0.722693,0.456046){\color[rgb]{0,0,0}\makebox(0,0)[lb]{\smash{$3$}}}%
    \put(0.271422,-0.038536){\color[rgb]{0,0,0}\makebox(0,0)[lb]{\smash{}}}%
  \end{picture}%
\endgroup%

    & \def\svgwidth{0.25\textwidth}
      %% Creator: Inkscape 0.91_64bit, www.inkscape.org
%% PDF/EPS/PS + LaTeX output extension by Johan Engelen, 2010
%% Accompanies image file 'alpha_o.pdf' (pdf, eps, ps)
%%
%% To include the image in your LaTeX document, write
%%   \input{<filename>.pdf_tex}
%%  instead of
%%   \includegraphics{<filename>.pdf}
%% To scale the image, write
%%   \def\svgwidth{<desired width>}
%%   \input{<filename>.pdf_tex}
%%  instead of
%%   \includegraphics[width=<desired width>]{<filename>.pdf}
%%
%% Images with a different path to the parent latex file can
%% be accessed with the `import' package (which may need to be
%% installed) using
%%   \usepackage{import}
%% in the preamble, and then including the image with
%%   \import{<path to file>}{<filename>.pdf_tex}
%% Alternatively, one can specify
%%   \graphicspath{{<path to file>/}}
%% 
%% For more information, please see info/svg-inkscape on CTAN:
%%   http://tug.ctan.org/tex-archive/info/svg-inkscape
%%
\begingroup%
  \makeatletter%
  \providecommand\color[2][]{%
    \errmessage{(Inkscape) Color is used for the text in Inkscape, but the package 'color.sty' is not loaded}%
    \renewcommand\color[2][]{}%
  }%
  \providecommand\transparent[1]{%
    \errmessage{(Inkscape) Transparency is used (non-zero) for the text in Inkscape, but the package 'transparent.sty' is not loaded}%
    \renewcommand\transparent[1]{}%
  }%
  \providecommand\rotatebox[2]{#2}%
  \ifx\svgwidth\undefined%
    \setlength{\unitlength}{102.069291bp}%
    \ifx\svgscale\undefined%
      \relax%
    \else%
      \setlength{\unitlength}{\unitlength * \real{\svgscale}}%
    \fi%
  \else%
    \setlength{\unitlength}{\svgwidth}%
  \fi%
  \global\let\svgwidth\undefined%
  \global\let\svgscale\undefined%
  \makeatother%
  \begin{picture}(1,0.535526)%
    \put(0,0){\includegraphics[width=\unitlength,page=1]{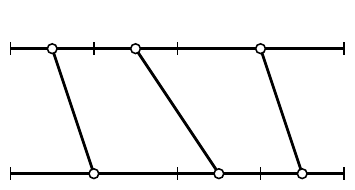}}%
    \put(0.133501,0.45509){\color[rgb]{0,0,0}\makebox(0,0)[lb]{\smash{$1$}}}%
    \put(0.367236,0.45509){\color[rgb]{0,0,0}\makebox(0,0)[lb]{\smash{$2$}}}%
    \put(0.718538,0.45509){\color[rgb]{0,0,0}\makebox(0,0)[lb]{\smash{$3$}}}%
    \put(0.279061,-0.043171){\color[rgb]{0,0,0}\makebox(0,0)[lb]{\smash{}}}%
  \end{picture}%
\endgroup%

    \\ \bottomrule
  \end{tabular}
\end{center}

\begin{theorem}\label{thm.gen-rel-papb}
  Let $\PP$ be a $\{\col c, \col o\}$-colored operad\footnote{The operad will not necessarily be a relative operad, but we will still use the notation $\PP(n,m) = \PP(\col{c}^{m}, \col{o}^{n}; \col{o})$ and $\PP^{\col{c}}(m) = \PP(\col{c}^{m}; \col{c})$.} in the category of categories, let $m_{\col{c}} \in \ob \PP^{\col{c}}(2)$, $m_{\col{o}} \in \ob \PP(2,0)$, $F \in \ob \PP(0,1)$ be objects, and let
  \begin{align*}
    a_{\col{c}} & : m_{\col{c}}(m_{\col{c}}(x_{1}, x_{2}), x_{3}) \to m_{\col{c}}(x_{1},
    m_{\col{c}}(x_{2}, x_{3})),
    & \pi & : m_{\col{o}}(f(x_{1}), f(x_{2})) \to f(m_{\col{c}}(x_{1}, x_{2})), \\
    t & : m_{\col{c}}(x_{1}, x_{2}) \to m_{\col{c}}(x_{2}, x_{1}),
    & \Psi & : m_{\col{o}}(f(x_{1}), y_{1}) \to m_{\col{o}}(y_{1}, f(x_{1})), \\
    a_{\col{o}} & : m_{\col{o}}(m_{\col{o}}(y_{1}, y_{2}), y_{3}) \to m_{\col{o}}(y_{1},
    m_{\col{o}}(y_{2}, y_{3})),
  \end{align*}
  be isomorphisms. Then there exists a morphism $\theta : \PaPB \to \PP$ such that
  \begin{align*}
    \theta(\mu_{\col{c}}) & = m_{\col{c}}, & \theta(\mu_{\col{o}}) & = m_{\col{o}}, & \theta(f) & = F, & \theta(\alpha_{\col{c}}) & = a_{\col{c}}, \\
    \theta(\alpha_{\col{o}}) & = a_{\col{o}}, & \theta(\tau) & = t, & \theta(p) & = \pi, & \theta(\psi) & = \Psi,
  \end{align*}
  (in which case this morphism is unique) if, and only if, the coherence diagrams of \Cref{fig.pentagons,fig.hexagons,fig.f-monoidal,fig.f-center,fig.f-braided,fig.f-monoid-center} commute.
\end{theorem}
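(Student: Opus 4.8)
The plan is to treat \Cref{thm.gen-rel-papb} as a presentation of the operad $\PaPB$ by generators and relations, adapting to the relative (two-colored) setting Fresse's proof that $\PaB$ governs braided monoidal categories \cite[§I.6]{Fresse2016a}. The argument rests on two structural facts about $\PaPB$: (i) every object of $\PaPB(n,m)$ is, by definition, an element of the free colored operad $\tMagma$, hence an iterated operadic composite of $\mu_{\col c}$, $f$ and $\mu_{\col o}$; and (ii) every morphism of $\PaPB$ is obtained from $\alpha_{\col c}$, $\alpha_{\col o}$, $\tau$, $p$, $\psi$ (together with the identities of $\mu_{\col c}, f, \mu_{\col o}$) by operadic composition, the symmetric group actions, composition of morphisms in the groupoids, and inversion. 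Granting (i) and (ii), uniqueness of $\theta$ is automatic: it is forced on objects by freeness of $\tMagma$ and on morphisms by (ii). Fact (i) is immediate; fact (ii) and the completeness of the relations are where the work lies.

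For (ii) I would use the pullback description $\PaPB = \omega^{*}\CoPB$, so that a morphism of $\PaPB(n,m)$ \emph{is} a morphism of $\CoPB(n,m)$, i.e.\ an isotopy class of bicolored braid between two configurations of $C^{0}(n,m)$. First recall Fresse's analysis of $\CoB$: its morphisms are generated over the operad structure by the elementary braiding in $\CoB(2)$, with the braid/Yang--Baxter relations as the only relations; after pulling back along $\mu_{\col c}\colon\magma\to\ob\CoB$ this is exactly the presentation of $\PaB$ by $\alpha_{\col c},\tau$ with pentagon and hexagons. This handles the closed part $\PaPB^{\col c}=\PaB$ and the purely aerial sub-braids. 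It then remains to treat (a) reparenthesizations, which are handled by $\alpha_{\col c},\alpha_{\col o},p$ — these three are sent by $\omega$ to identities of $\CoPB$, being the associativity isomorphisms of $\mu_{\col c},\mu_{\col o}$ and the comparison between a product of $f$'s and $f$ of a product — and (b) the moves in which an aerial strand slides past a terrestrial one, which, after localizing near the two strands via the relative operad structure, are operadic composites of $\psi^{\pm1}$. Here one uses that a terrestrial point carries no monodromy and cannot be moved past another terrestrial point (the ``terrestrial direction'' behaves like $\PaP$), while an aerial point can slide over but not around a terrestrial one, so that $\psi$ is genuinely not expressible through $\tau$. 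An induction on $n+m$, peeling off an outermost $\mu_{\col o}$, $f$ or $\mu_{\col c}$, then yields (ii).

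With the generators in hand, I would identify the relations and, crucially, show they are complete. The braid/Yang--Baxter relations among aerial strands give, in parenthesized form, the pentagon and hexagons of \Cref{fig.pentagons,fig.hexagons} and the pentagon for $\mu_{\col o}$; the vanishing of $\omega(p),\omega(\alpha_{\col c}),\omega(\alpha_{\col o})$ forces the square relating $p$ to the associators (\Cref{fig.f-monoidal}); and the geometry of an aerial strand moving around terrestrial strands, together with its interaction with $p$ and $\tau$, forces the half-braiding hexagon (\Cref{fig.f-center}), the compatibility of $\psi$ with the braiding (\Cref{fig.f-braided}) and the compatibility of $p$ with $\psi$ (\Cref{fig.f-monoid-center}). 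That each of these diagrams commutes in $\PaPB$ is a direct check in the braid picture. The harder statement — and the main obstacle — is that these diagrams form a \emph{complete} set of relations, i.e.\ any two parallel composites of generators representing the same bicolored braid can be connected using only these diagrams, the operad structure and the groupoid axioms. This is the relative analogue of the coherence theorem for braided monoidal categories, and I expect to obtain it by Fresse's strategy: reduce an arbitrary equality of composites to a normal form (say: all associators and $p$'s pushed to one side, the aerial braiding in a standard form, each terrestrial-aerial crossing recorded by a $\psi^{\pm1}$) by repeated application of the listed diagrams.

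Finally the two implications follow formally. If $\theta$ exists with the prescribed values on generators, then applying $\theta$ to the diagrams of \Cref{fig.pentagons,fig.hexagons,fig.f-monoidal,fig.f-center,fig.f-braided,fig.f-monoid-center} — which commute in $\PaPB$ — shows they commute in $\PP$. Conversely, if those diagrams commute in $\PP$, then by completeness every relation of $\PaPB$ is a consequence of the listed ones under operadic composition, the $\Sigma$-actions and the groupoid operations, all of which are respected in $\PP$; hence the assignment on generators respects all relations and extends (uniquely, by (ii)) to a well-defined morphism of operads in categories $\theta\colon\PaPB\to\PP$.
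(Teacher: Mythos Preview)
Your plan is correct and parallels the paper's argument: both treat the theorem as a presentation by generators and relations and both aim at a normal form for morphisms. Where you diverge is in how the hardest step --- completeness of the relations --- is handled. You propose an induction on $n+m$ peeling off an outermost generator and then pushing to a normal form by hand; the paper instead writes down the normal form directly and outsources the coherence to known theorems. Concretely, every $Y\in\PaPB(n,m)$ is factored as $Y=\mu'\circ X\circ\mu$, where $\mu,\mu'$ are \emph{shuffle-type morphisms} (aerial strands do not cross one another; these lie in the suboperad generated by $\alpha_{\col o}^{\pm1},p^{\pm1},\psi^{\pm1}$) bringing source and target to objects of the shape $\mu_{\col o}(x^{\col o},f(x^{\col c}))$, and $X=\mu_{\col o}(X^{\col o},f(X^{\col c}))$ with $X^{\col o}\in\PaP(n)$, $X^{\col c}\in\PaB(m)$. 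The middle piece is then handled by the known presentations of $\PaP$ and $\PaB$ (\cite[Theorems I.6.1.7 and I.6.2.4]{Fresse2016a}). For the shuffle-type pieces the paper does not reprove coherence: it invokes MacLane's coherence theorem together with Epstein's coherence theorem for monoidal functors~\cite{Epstein1966} to show that $\theta$ is unambiguous on any decomposition into $\alpha_{\col o}^{\pm1},p^{\pm1},\psi^{\pm1}$, and then reinterprets the diagrams of \Cref{fig.f-center,fig.f-monoid-center} as hexagon axioms (with $\psi$ playing the role of a braiding) so that the braid-group step of~\cite[Theorem I.6.2.4]{Fresse2016a} applies verbatim to the underlying element of $B_{n+m}$. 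Finally, compatibility with operadic composition is where \Cref{fig.f-braided} is actually used: one must check identities such as $\psi\circ_{1}^{\col o}\id_{f}=f(\tau)$ under $\theta$. This packaging buys you the completeness you flag as the main obstacle without a bespoke inductive argument.
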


\begin{figure}[htbp]
  \centering
  \begin{tikzcd}[row sep = tiny, column sep = tiny, font = \small]
     {} & m(m(m(x_1, x_2), x_3), x_4) \dlar[swap]{m(a,\id)} \ar{ddr}{a} \\
     m(m(x_1, m(x_2, x_3)), x_4) \ar[swap]{dd}{a} & {} \\
     {} & {} & m(m(x_1, x_2), m(x_3, x_4)) \ar{ddl}{a} \\
     m(x_1, m(m(x_2, x_3), x_4)) \ar[swap]{dr}{m(\id,a)} \\
     {} & m(x_1, m(x_2, m(x_3, x_4)))
  \end{tikzcd}
  \caption{Pentagon for $(m, a) = (m_{\col{c}}, a_{\col{c}})$ and $(m_{\col{o}}, a_{\col{o}})$}
  \label{fig.pentagons}
\end{figure}

% cramped =
% every matrix/.append style={inner sep=+-0.3em}, every cell/.append style={inner sep=+0.3em}
\begin{figure}[htbp]
  \centering
  \begin{tikzcd}[row sep = tiny, column sep = tiny, every matrix/.append style={inner sep=+-0.3em}, every cell/.append style={inner sep=+0.3em}]
    {} & m_{\col{c}}(m_{\col{c}}(x_1, x_2), x_3) \dlar[swap]{m_{\col{c}}(t,\id)} \drar{a_{\col{c}}} & {} \\
    m_{\col{c}}(m_{\col{c}}(x_2, x_1), x_3) \dar[swap]{a_{\col{c}}} & {} & m_{\col{c}}(x_1, m_{\col{c}}(x_2, x_3)) \dar{t} \\
    m_{\col{c}}(x_2, m_{\col{c}}(x_1, x_3)) \drar[swap]{m_{\col{c}}(\id,t)} & {} & m_{\col{c}}(m_{\col{c}}(x_2, x_3), x_1) \dlar[swap]{a_{\col{c}}} \\
    {} & m_{\col{c}}(x_2, m_{\col{c}}(x_3, x_1))
  \end{tikzcd}
  \begin{tikzcd}[row sep = tiny, column sep = tiny, every matrix/.append style={inner sep=+-0.3em}, every cell/.append style={inner sep=+0.3em}]
    {} & m_{\col{c}}(x_1, m_{\col{c}}(x_2, x_3)) \dlar{m_{\col{c}}(\id,t)} \drar{a_{\col{c}}^{-1}} \\
    m_{\col{c}}(x_1, m_{\col{c}}(x_3, x_2)) \dar[swap]{a_{\col{c}}^{-1}} & {} & m_{\col{c}}(m_{\col{c}}(x_1, x_2), x_3) \dar{t} \\
    m_{\col{c}}(m_{\col{c}}(x_1, x_3), x_2) \drar[swap]{m_{\col{c}}(t,\id)} & {} & m_{\col{c}}(x_3, m_{\col{c}}(x_1, x_2)) \dlar{a_{\col{c}}^{-1}} \\
    {} & m_{\col{c}}(m_{\col{c}}(x_3, x_1), x_2)
  \end{tikzcd}
  \caption{Hexagons}
  \label{fig.hexagons}
\end{figure}

\begin{figure}[htbp]
  \centering
  \begin{tikzcd}[row sep = small, column sep = tiny, every matrix/.append style={inner sep=+-0.3em}, every cell/.append style={inner sep=+0.3em}]
    {} & m_{\col{o}}(m_{\col{o}}(f(x_1), f(x_2)), f(x_3)) \dlar[swap]{m_{\col{o}}(\pi,\id)} \drar{a_{\col{c}}} \\
    m_{\col{o}}(f(m_{\col{c}}(x_1,x_2)), f(x_3)) \dar[swap]{\pi} & {} & m_{\col{o}}(f(x_1),
    m_{\col{o}}(f(x_2), f(x_3))) \dar{m_{\col{o}}(\id, \pi)} \\
    f(m_{\col{c}}(m_{\col{c}}(x_1, x_2), x_3)) \drar[swap]{f(a_{\col{c}})} & {} & m_{\col{o}}(f(x_1),
    f(m_{\col{c}}(x_2, x_3))) \dlar{\pi} \\
    {} & f(m_{\col{c}}(x_1, m_{\col{c}}(x_2, x_3)))
  \end{tikzcd}
  \caption{$F$ is monoidal}
  \label{fig.f-monoidal}
\end{figure}

\begin{figure}[htbp]
  \centering
  \begin{tikzcd}[row sep = small, column sep = tiny, every matrix/.append style={inner sep=+-0.3em}, every cell/.append style={inner sep=+0.3em}]
    {} & m_{\col{o}}(m_{\col{o}}(f(x_1), x_2), x_3) \dlar[swap]{m_{\col{o}}(\Psi,\id)} \drar{a_{\col{o}}} \\
    m_{\col{o}}(m_{\col{o}}(x_2, f(x_1)), x_3) \dar[swap]{a_{\col o}} & {} & m_{\col{o}}(f(x_1), m_{\col{o}}(x_2,x_3)) \dar{\Psi} \\
    m_{\col{o}}(x_2, m_{\col{o}}(f(x_1), x_3)) \drar[swap]{m_{\col{o}}(\id,\Psi)} & {} &
    m_{\col{o}}(m_{\col{o}}(x_2,x_3), f(x_1)) \dlar{a_{\col o}} \\
    {} & m_{\col{o}}(x_2, m_{\col{o}}(x_3, f(x_1)))
  \end{tikzcd}
  \caption{$\Psi$ is a half-braiding}
  \label{fig.f-center}
\end{figure}

\begin{figure}[htbp]
  \centering
  \begin{tikzcd}[every matrix/.append style={inner sep=+-0.3em}, every cell/.append style={inner sep=+0.3em}]
    m_{\col{o}}(f(x_1), f(x_2)) \rar{\Psi} \dar[swap]{\pi} & m_{\col{o}}(f(x_2), f(x_1))
    \dar{\pi} \\
    f(m_{\col{c}}(x_1, x_2)) \rar[swap]{f(t)} & f(m_{\col{c}}(x_2, x_1))
  \end{tikzcd}
  \caption{$F$ is braided}
  \label{fig.f-braided}
\end{figure}

\begin{figure}[htbp]
  \centering
  \begin{tikzcd}[every matrix/.append style={inner sep=+-0.3em}, every cell/.append style={inner sep=+0.3em}, row sep = small, column sep = tiny]
    {} & m_{\col{o}}(f(x_1), m_{\col{o}}(f(x_2), x_3)) \dlar[swap]{m(\id, \Psi)}
    \drar{a_{\col{o}}^{-1}} \\
    m_{\col{o}}(f(x_1), m_{\col{o}}(x_3, f(x_2))) \ar[swap]{ddd}{a_{\col{o}}^{-1}} & {} &
    m_{\col{o}}(m_{\col{o}}(f(x_1), f(x_2)), x_3) \dar{m_{\col{o}}(\pi,\id)} \\
    {} & {} & m_{\col{o}}(f(m_{\col{c}}(x_1,x_2)), x_3) \dar{\Psi} \\
    {} & {} & m_{\col{o}}(x_3, f(m_{\col{c}}(x_1,x_2))) \dar{m_{\col{o}}(\id, \pi^{-1})} \\
    m_{\col{o}}(m_{\col{o}}(f(x_1), x_3), f(x_2)) \drar[swap]{m_{\col{o}}(\Psi,\id)} & {} & m_{\col{o}}(x_3,
    m_{\col{o}}(f(x_1), f(x_2))) \dlar{a_{\col{o}}^{-1}} \\
    {} & m_{\col{o}}(m_{\col{o}}(x_3, f(x_1)), f(x_2))
  \end{tikzcd}
  \caption{$F$ is monoidal w.r.t.\ half-braidings}
  \label{fig.f-monoid-center}
\end{figure}

Recall that an algebra over a colored operad $\QQ$ is a morphism $\QQ \to \End_{(A,B)}$, where
\begin{align*}
  \End_{(A,B)}^{\col c}(n,m)
  & = \hom(B^{\otimes n} \otimes A^{\otimes m}, A),
  & \End_{(A,B)}^{\col o}(n,m)
  & = \hom(B^{\otimes n} \otimes A^{\otimes m}, B).
\end{align*}

Given a morphism $\PaPB \to \End_{(\cM,\cN)}$ with the names as in \Cref{thm.gen-rel-papb} for the images of the generators, the previous coherence diagrams are exactly the diagrams encoding the fact that $(m_{\col{c}}, a_{\col{c}}, t_{\col{c}})$ is a braided monoidal structure on $\cM$, $(m_{\col{o}}, a_{\col{o}})$ is a monoidal structure on $\cN$, and $F$ is a braided monoidal functor to the Drinfeld center. We thus get:
\begin{corollary}\label{cor.alg-papb}
  An algebra over $\PaPB$ is the data of:
  \begin{itemize}
  \item A (non-unitary) monoidal category $(\cN, \otimes)$;
  \item A (non-unitary) braided monoidal category $(\cM, \otimes, \tau)$;
  \item A strong braided monoidal functor $F : \cM \to \ZZ(\cN)$.
  \end{itemize}
\end{corollary}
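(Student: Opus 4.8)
The plan is to obtain the corollary as an essentially immediate reading of \Cref{thm.gen-rel-papb}, applied to the colored endomorphism operad $\PP = \End_{(\cM,\cN)}$, whose hom-objects $\hom(\cN^{\otimes n}\otimes\cM^{\otimes m},\cM)$ and $\hom(\cN^{\otimes n}\otimes\cM^{\otimes m},\cN)$ are functor categories, hence genuine categories. By definition an algebra over $\PaPB$ is a morphism of operads in categories $\theta:\PaPB\to\End_{(\cM,\cN)}$; since every morphism of $\PaPB$ is invertible, $\theta$ automatically sends $\alpha_{\col c},\alpha_{\col o},\tau,p,\psi$ to isomorphisms. Thus \Cref{thm.gen-rel-papb} identifies the datum of $\theta$ with: functors $m_{\col c}=\theta(\mu_{\col c}):\cM\times\cM\to\cM$, $m_{\col o}=\theta(\mu_{\col o}):\cN\times\cN\to\cN$ and $F=\theta(f):\cM\to\cN$; natural isomorphisms $a_{\col c},t$ among functors built from $m_{\col c}$, a natural isomorphism $a_{\col o}$ among functors built from $m_{\col o}$, a natural isomorphism $\pi_{X_1,X_2}:F(X_1)\otimes F(X_2)\to F(X_1\otimes X_2)$, and a natural isomorphism $\Psi_{X,Y}:F(X)\otimes Y\to Y\otimes F(X)$ (natural in $X\in\cM$ and $Y\in\cN$); the whole subject to commutativity of \Cref{fig.pentagons,fig.hexagons,fig.f-monoidal,fig.f-center,fig.f-braided,fig.f-monoid-center}.

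The remaining work is to recognise each of these six families of diagrams as a standard structure axiom. The pentagon of \Cref{fig.pentagons} for $(m_{\col c},a_{\col c})$ together with the two hexagons of \Cref{fig.hexagons} is exactly the list of axioms making $(\cM,\otimes=m_{\col c},a_{\col c},\tau=t)$ a non-unitary braided monoidal category; the pentagon of \Cref{fig.pentagons} for $(m_{\col o},a_{\col o})$ makes $(\cN,\otimes=m_{\col o},a_{\col o})$ a non-unitary monoidal category; and \Cref{fig.f-monoidal} is the compatibility of $\pi$ with the two associators, i.e.\ the statement that $(F,\pi)$ is a strong monoidal functor $\cM\to\cN$.

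Next I would promote $F$ to a functor $\widetilde F:\cM\to\ZZ(\cN)$. For fixed $X$, the transformation $\Psi_{X,-}:F(X)\otimes-\Rightarrow-\otimes F(X)$ is an invertible natural transformation, and \Cref{fig.f-center} is precisely the half-braiding hexagon, so $(F(X),\Psi_{X,-})$ is an object of $\ZZ(\cN)$. Naturality of $\Psi$ in the $\cM$-variable is word for word the condition that $F(g):F(X)\to F(X')$ be a morphism of $\ZZ(\cN)$ for every $g:X\to X'$, so $X\mapsto(F(X),\Psi_{X,-})$ defines a functor $\widetilde F$ lifting $F$ along the forgetful functor $\ZZ(\cN)\to\cN$. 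Comparing \Cref{fig.f-monoid-center} with the definition of the tensor product $\widetilde F(X_1)\otimes\widetilde F(X_2)$ in $\ZZ(\cN)$ shows that $\pi_{X_1,X_2}$ is a morphism in $\ZZ(\cN)$, so $(\widetilde F,\pi)$ is strong monoidal; and comparing \Cref{fig.f-braided} with the definition of the braiding of $\ZZ(\cN)$ (whose component on $\widetilde F(X_1)\otimes\widetilde F(X_2)$ is $\Psi_{X_1,F(X_2)}$) shows that $\widetilde F$ is braided. Conversely, any triple consisting of a monoidal category $(\cN,\otimes)$, a braided monoidal category $(\cM,\otimes,\tau)$ and a strong braided monoidal functor $F:\cM\to\ZZ(\cN)$ manifestly produces data satisfying the six diagrams, so the correspondence is a bijection; this is the assertion of the corollary. (Everything is non-unitary here; the unital variant $\PaPB_{+}$ is treated separately.)

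The only genuinely delicate point — the step I would carry out with care — is the identification in the previous paragraph of \Cref{fig.f-monoid-center} and \Cref{fig.f-braided} with the defining diagrams for the tensor product and the braiding of the Drinfeld center: one must chase the associators $a_{\col o}$ and the isomorphisms $\pi$, $\Psi$ (together with their inverses) around those two hexagons and match them term by term with the explicit formulas in the definition of $\ZZ(\cN)$. Everything else is a mechanical transcription of \Cref{thm.gen-rel-papb} and of the definitions of (braided) monoidal functor and of the Drinfeld center.
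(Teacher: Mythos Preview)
Your proposal is correct and follows exactly the paper's own approach: the paper states in one sentence that, applying \Cref{thm.gen-rel-papb} to $\PP=\End_{(\cM,\cN)}$, the coherence diagrams of \Cref{fig.pentagons,fig.hexagons,fig.f-monoidal,fig.f-center,fig.f-braided,fig.f-monoid-center} are precisely the axioms for a braided monoidal structure on $\cM$, a monoidal structure on $\cN$, and a braided monoidal functor $F:\cM\to\ZZ(\cN)$. Your write-up is considerably more detailed than the paper's single sentence, but the underlying argument is identical.
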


\begin{definition} \label{def.shuffle-type}
  Between two objects $x,y \in \PaPB(n,m)$ such that the terrestrial (resp.\ aerial) points of $x$ are numbered in the same order as the terrestrial (resp.\ aerial) points $y$, there is a unique morphism $\mu \in \hom_{\PaPB(n,m)}(x,y)$, called a \textbf{shuffle-type morphism}, such that the aerial strands do not cross each other (see \Cref{fig.exa-shuffle-type}).
\end{definition}

\begin{figure}[htbp]
  \centering
  \def\svgwidth{0.6\textwidth} \import{fig/}{exa_shuffle.pdf_tex}
  \caption{Example of shuffle-type morphism}
  \label{fig.exa-shuffle-type}
\end{figure}

\begin{proof}[Proof of \Cref{thm.gen-rel-papb}]
  It is clear (a simple exercise in drawing braid diagrams) that the morphisms of $\PaPB$ satisfy the corresponding relations, thus we get the ``only if'' part of the theorem.

  Let $Y \in \hom_{\PaPB(n,m)}(x_{1}, x_{2})$ be a morphism. We want to decompose it as in \Cref{fig.decomp-papb}:
  \begin{itemize}
  \item We first arbitrarily choose two objects $x_{1}', x_{2}'$ which are in the image of $\PaP(n) \times \PaB(m)$ by $\mu_{\col{o}}(-, f(-))$. In other words, $x_{i}' = \mu_{\col{o}}(x_{i}^{\col{o}}, f(x_{i}^{\col{c}}))$ is the concatenation of an object $x_{i}^{\col{o}} \in \PaP(n) = \PaPB(n,0)$ and of the image by $f$ of an object $x_{i}^{\col{c}} \in \PaB(m)$. We also require that the aerial points (resp.\ the terrestrial points) of $x_{i}'$ are numbered in the same order as those of $x_{i}$.
  \item We take the unique shuffle-type morphism $\mu : x_{1} \to x_{1}'$.
  \item We build a morphism $X = \mu_{\col{o}}(X^{\col{o}}, f(X^{\col{c}})) : x_{1}' \to x_{2}'$. It is the concatenation of $X^{\col{o}} \in \PaP(n)$, and $X^{\col{c}} \in \PaB(m)$.  Explicitly, $X^{\col{o}}$ is the colored permutation where all the aerial strands of $\omega_{*}(Y)$ have been forgotten, and $X^{\col{c}}$ is the colored braid where all the terrestrial strands of $\omega_{*}(Y)$ have been forgotten.
  \item Finally, we take the unique shuffle-type morphism $\mu' : x_{2}' \to x_{2}$.
  \end{itemize}

  \begin{figure}[htbp]
    \centering
    \def\svgwidth{1\textwidth}
    \import{fig/}{decomp_papb.pdf_tex}
    \caption{Decomposition in $\PaPB(2,3)$}
    \label{fig.decomp-papb}
  \end{figure}

  By construction, $Y = \mu' \circ X \circ \mu$. Besides, this decomposition is unique given the specified intermediary objects $x_{1}'$, $x_{2}'$, so it suffices to show that $\theta$ can be defined unequivocally on each part, that it doesn't depend on the choice of $x_{1}'$ and $x_{2}'$, and that it is compatible with operadic composition.

  The shuffle-type morphisms are all in the suboperad of $\PaPB$ generated by $\alpha_{\col{o}}^{\pm 1}$, $p^{\pm 1}$, $\psi^{\pm 1}$: first one can cut the objects of $\PaB$ in the smallest possible pieces with $p^{-1}$, the $\alpha_{\col{o}}^{\pm 1}$ and $\psi^{\pm 1}$ can be used to bring all the aerial points at their positions, and finally $p$ is used to glue back all the aerial pieces. By the theorems I.6.1.7 and I.6.2.4 of \cite{Fresse2016a}, the two morphisms $X^{\col{o}} \in \PaP(n)$ and $X^{\col{c}} \in \PaB(m)$ are respectively in the suboperads generated by $\mu_{\col{o}}^{\pm 1}, \alpha_{\col{o}}^{\pm 1}$ and by $\mu_{\col{c}}^{\pm 1}, \alpha_{\col{c}}^{\pm 1}, \tau^{\pm 1}$. It follows that every morphism $Y$ of $\PaPB$ is in the suboperad generated by all these elements, thus the morphism $\theta : \PaPB \to \PP$, if it exists, is unique.

  By the same theorems of~\cite{Fresse2016a}, the pentagons (\Cref{fig.pentagons}) and the hexagons (\Cref{fig.hexagons}) show that the morphism $\theta$ can be defined with no ambiguity on the two pieces $X^{\col{c}}$ and $X^{\col{o}}$. The possible choices for $x_{1}'$ and $x_{2}'$ are all related by associators, so the pentagons (\Cref{fig.pentagons}) and MacLane's coherence theorem~\cite{Mac1998} for monoidal categories show that the image does not depend on the choice of $x_{1}'$ and $x_{2}'$.

  Let $\mu \in \PaPB(n,m)$ be a shuffle-type morphism; we saw that it could be decomposed in terms of $p^{\pm 1}$, $\psi^{\pm 1}$ and $\alpha^{\pm 1}$. The coherence theorem of MacLane~\cite{Mac1998} and the coherence theorem of Epstein~\cite{Epstein1966} on monoidal functors (non-symmetric version) show that, thanks to the pentagons (\Cref{fig.pentagons}) and the fact that $F$ is monoidal (\Cref{fig.f-monoidal}), the image $\theta(\mu)$ neither depends on the choice of associator decomposition, nor on the choice of decomposition of $p^{\pm 1}$, nor on the way the $\psi^{\pm 1}$ are gathered in the parenthesizing. It thus suffices to define $\theta$ on the underlying morphism of $\CoPB$.

  This last morphism is actually an element of the braid group $B_{n+m}$ (of course not all the elements of the braid group can given a morphism: terrestrial strands cannot cross any other strand). By seeing $\psi$ as a braiding, and by interpreting the relations of \Cref{fig.f-center,fig.f-monoid-center} as two hexagon relations, we can adapt the proof of the step 2 of \cite[Theorem 6.2.4]{Fresse2016a} to see that the image by $\theta$ of this braid does not depend on its representation. Finally, $\theta$ is well-defined on every morphism. It remains to show that it respects operad composition.

  By adapting the fourth step of the proof of the same theorem of \cite{Fresse2016a} and by using the relation of \Cref{fig.f-braided}, we can see that $\theta$ respects operadic composition. Indeed, shuffle-type morphism are sent by construction on elements decomposed in terms of associators, their inverses, $p^{\pm 1}$ and $\psi^{\pm 1}$, while for example $\psi \circ_{1}^{\col{o}} \id_{f} = f(\tau)$; but thanks to the relation of \Cref{fig.f-braided}, both elements are equal in the image.
\end{proof}

By dropping all mentions of parenthesizing, we get:
\begin{proposition}
  An algebra over $\CoPB$ consists of a strict (non-unitary) monoidal category $\cN$, a strict braided (non-unitary) monoidal category $\cM$, and of a strict braided monoidal functor $F : \cM \to \ZZ(\cN)$.
\end{proposition}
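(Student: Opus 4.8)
The plan is to deduce the statement for $\CoPB$ from \Cref{thm.gen-rel-papb} and \Cref{cor.alg-papb} by the standard "strictification" argument: dropping parenthesizations corresponds exactly to forcing the associator isomorphisms to be identities. First I would observe that there is an evident morphism of operads in groupoids $\PaPB \to \CoPB$ induced by $\omega : \tMagma \to \ob\CoPB$ (this is implicit in \Cref{def.papb}), and that it is surjective on objects and fully faithful, hence a categorical equivalence (this is the last arrow of the zigzag in \Cref{thm.model-sc}). Therefore, an algebra over $\CoPB$ restricts along this morphism to an algebra over $\PaPB$; by \Cref{cor.alg-papb} this gives a non-unitary monoidal category $\cN$, a non-unitary braided monoidal category $\cM$, and a strong braided monoidal functor $F : \cM \to \ZZ(\cN)$. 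The content of the proposition is that, when the algebra is over $\CoPB$ rather than merely $\PaPB$, all the structure is \emph{strict}: the associators $a_{\col c}, a_{\col o}$, the monoidal structure constraint $\pi$ of $F$, and indeed every coherence isomorphism become identities, because in $\CoPB$ the objects $m_{\col c}(m_{\col c}(x_1,x_2),x_3)$ and $m_{\col c}(x_1,m_{\col c}(x_2,x_3))$ — and likewise the open analogues, and $m_{\col o}(f(x_1),f(x_2))$ versus $f(m_{\col c}(x_1,x_2))$ — are literally the same object (the underlying colored configuration forgets the parenthesization).

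In more detail, the key steps are: (1) identify $\ob\CoPB(n,m) = C^0(n,m) \cong \Sh_{n,m}\times\Sigma_n\times\Sigma_m$ and note that the images under $\omega$ of $m_{\col c}(m_{\col c}(x_1,x_2),x_3)$ and $m_{\col c}(x_1,m_{\col c}(x_2,x_3))$ coincide, and similarly for $\alpha_{\col o}$ and for the source and target of $p$; (2) conclude that for an algebra $\CoPB \to \End_{(\cM,\cN)}$ the images $a_{\col c}, a_{\col o}, \pi$ of these morphisms are endomorphisms of a single object of $\End$, and since $\CoPB$ has \emph{no} nontrivial shuffle-type morphisms (by \Cref{def.shuffle-type}, between objects with the same ordering there is a unique such morphism, and in $\CoPB$ source and target coincide so it must be the identity), these images are identities; (3) feed this back through \Cref{thm.gen-rel-papb}: the conditions of that theorem are automatically satisfied, so there is a unique $\theta : \PaPB \to \End_{(\cM,\cN)}$, and by the identification of coherence data it factors through $\CoPB$, recovering the given algebra. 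One then reads off that $(m_{\col c}, a_{\col c} = \id, t)$ is a \emph{strict} braided monoidal structure on $\cM$, $(m_{\col o}, a_{\col o} = \id)$ a strict monoidal structure on $\cN$, and $F$ a \emph{strict} braided monoidal functor into $\ZZ(\cN)$ (note the braiding $\tau$ and the half-braiding $\Psi$ remain genuinely nontrivial, since the corresponding braid generators are nontrivial in $B_{n+m}$; only the associativity- and functor-coherence isomorphisms collapse).

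Conversely — and this is the half that genuinely needs \Cref{thm.gen-rel-papb} — given strict data $(\cN,\cM,F)$ as in the statement, one must produce an algebra $\CoPB \to \End_{(\cM,\cN)}$. Here I would apply \Cref{thm.gen-rel-papb} with $a_{\col c}, a_{\col o}, \pi$ set to identities and $t, \Psi$ the braiding and half-braiding: the coherence diagrams of \Cref{fig.pentagons,fig.hexagons,fig.f-monoidal,fig.f-center,fig.f-braided,fig.f-monoid-center} then reduce to exactly the axioms of a strict braided monoidal category, a strict monoidal category, and a strict braided monoidal functor to the Drinfeld center, all of which hold by hypothesis; this yields $\theta : \PaPB \to \End_{(\cM,\cN)}$, and since $\theta$ sends every associator and every $p^{\pm1}$ to an identity, it descends along $\PaPB \to \CoPB$ (one checks $\theta$ equalizes the two composites determined by the fibers of $\omega$, using that parallel objects of $\PaPB$ with the same $\omega$-image are joined only by shuffle-type morphisms, all sent to identities). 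The main obstacle is the bookkeeping in this last descent argument: one must verify carefully that every morphism of $\CoPB$ lifts to $\PaPB$ and that any two lifts have the same image under $\theta$ — but this is precisely what the proof of \Cref{thm.gen-rel-papb} already establishes, since it shows every morphism of $\PaPB$ lies in the suboperad generated by the listed elements and that $\theta$ is insensitive to the associator- and $p$-decompositions. So the proposition follows by specializing that proof to the case where the associators and $\pi$ are identities.
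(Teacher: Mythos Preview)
Your proposal is correct and follows essentially the same approach as the paper: the paper's entire proof is the single sentence ``By dropping all mentions of parenthesizing, we get:'', which amounts to specializing \Cref{thm.gen-rel-papb} to the situation where the associators $\alpha_{\col c}, \alpha_{\col o}$ and the morphism $p$ become identities in $\CoPB$. You have simply made explicit the restriction/descent argument along $\PaPB \to \CoPB$ that the paper leaves to the reader; the substance is the same.
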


\subsection{Unitary versions}
\label{sec.unitary-versions}

We are going to define unitary versions $\CoPB_{+}$ and $\PaPB_{+}$ of the operads we are studying, satisfying $\CoPB_{+}(0,0) = \PaPB_{+}(0,0) = \{ *_{\col o} \}$. For consistency we will denote $*_{\col c}$ the element of the one-colored unitary operads we will consider ($\CoB_{+}$, $\PaB_{+}$, etc.).

\begin{definition}
  Let $\CoPB_{+}$ be the relative operad over $\CoB_{+}$, defined as a unitary extension of $\CoPB_{+}$. Composition with $*_{\col c} \in \CoB_{+}(0)$ forgets aerial strands, while composition with $*_{\col o}$ forgets terrestrial strands.
\end{definition}

\begin{definition}
  Let $\tMagma_{+}$ be the relative operad over $\magma_{+}$, a unitary extension of $\tMagma$. Composition with nullary elements is given on generators by (it is not necessary to specify $f(*_{\col o})$ as $\tMagma_{+}(0,0)$ is a singleton anyway):
  \[ \mu_{\col{c}}(*_{\col c}, \id_{\col c}) = \mu_{\col{c}}(\id_{\col c}, *_{\col c}) = \id_{\col{c}}, \quad \mu_{\col{o}}(*_{o}, \id_{\col o}) = \mu_{\col{o}}(\id_{\col o}, *_{\col o}) = \id_{\col{o}}. \]

  Let also $\PaPB_{+} = \omega_{+}^{*} \CoPB_{+}$ be the pullback of $\CoPB_{+}$ along $\omega_{+}$, where $\omega_{+}$ is defined as the $\omega$ of \Cref{def.papb} (it is compatible with the unitary extensions).
\end{definition}

\begin{proposition} \label{prop.zigzag-unit} There is a zigzag of categorical equivalences, where $\tMagma_{+}' \subset \SC_{+}$ is the sub--operad generated by $m_{\col{c}}$, $m_{\col{o}}$, $f$ and the nullary elements:
  \[ \pi \SC_{+} \qiso* \left( \pi \SC_{+} \right)_{\mid \tMagma_{+}'} \qiso \PaPB_{+} := \omega_{+}^{*} \CoPB_{+} \qiso \CoPB_{+}, \]
\end{proposition}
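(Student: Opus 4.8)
The plan is to transcribe the proof of \Cref{thm.model-sc} almost word for word, replacing every operad that appears by its unitary extension; I will only spell out the places where the nullary operations demand an extra word.

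The first step is the unitary analogue of the freeness lemma of \Cref{sec.magmas}: the morphism $\tMagma_{+} \to \SC_{+}$ sending $\mu_{\col c}, f, \mu_{\col o}$ to $m_{\col c}, f, m_{\col o}$ and the nullary generators to the configurations obtained by forgetting a full (resp.\ half) disk is \emph{injective}, so that its image is precisely $\tMagma_{+}'$ and $\tMagma_{+} \cong \tMagma_{+}'$. For operations with closed output this is the pointed version of the freeness of $\magma$ in $\DD_{2}$ \cite[Proposition I.6.2.2(a)]{Fresse2016a}. For operations with open output and arity $(n,m) \neq (0,0)$, the unit relations $\mu_{\col c}(*_{\col c},\id_{\col c}) = \id_{\col c}$, $\mu_{\col o}(*_{\col o},\id_{\col o}) = \id_{\col o}$ (and their mirrors) together with $f(*_{\col c}) = *_{\col o}$ let one rewrite any element of $\tMagma_{+}(n,m)$ as an honest non-unitary magma word in $\tMagma(n,m)$, to which the set-level retraction built in the proof of the non-unitary lemma applies. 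The arity $(0,0)$ is trivial since $\tMagma_{+}(0,0) = \SC_{+}(0,0) = *$.

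Next I would identify $(\pi\SC_{+})_{\mid\tMagma_{+}'}$ with $\PaPB_{+} = \omega_{+}^{*}\CoPB_{+}$. Since $\tMagma_{+}' \subset \ob\pi\SC_{+}$ is a suboperad, $(\pi\SC_{+})_{\mid\tMagma_{+}'}$ is an operad in groupoids with objects $\tMagma_{+}'$ in each arity; its morphisms are homotopy classes of paths in $\SC_{+}(n,m)$, and via the disk-center equivalence $\SC_{+}(n,m) \simeq \Conf(n,m)$ together with the pointed version of \cite[Proposition I.6.2.2(b)]{Fresse2016a} these coincide with $\Hom_{\CoPB_{+}(n,m)}(\omega_{+}(u),\omega_{+}(v))$, giving the isomorphism. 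This is the unitary transcription of the proof of the first part of \Cref{thm.model-sc}, which itself adapts \cite[Proposition I.6.2.2(b)]{Fresse2016a}; as \cite{Fresse2016a} already sets up $\magma_{+} \to \DD_{2}^{+}$ and $\PaB_{+}$, the pointed statements are available directly.

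Finally I would check the two categorical equivalences. The inclusion $(\pi\SC_{+})_{\mid\tMagma_{+}'} \hookrightarrow \pi\SC_{+}$ is full and faithful by construction and essentially surjective because $\tMagma_{+}'(n,m)$ meets every connected component of $\SC_{+}(n,m) \sim \Sigma_{n} \times \DD_{2}^{+}(m)$: for $(n,m) \neq (0,0)$ this is exactly the statement used in \Cref{thm.model-sc} (iterates of $\mu_{\col c}$ reach every component of $\DD_{2}^{+}(m)$, while $\mu_{\col o}$ and the symmetric action realize all of $\Sigma_{n}$), and for $(0,0)$ both spaces are singletons. The map $\PaPB_{+} = \omega_{+}^{*}\CoPB_{+} \to \CoPB_{+}$ is fully faithful by the very definition of the pullback and surjective on objects because $\omega_{+} : \tMagma_{+} \to \ob\CoPB_{+}$ is onto -- every element of $C^{0}(n,m)$ is the image of a parenthesized magma word, which is the non-unitary surjectivity of $\omega$ for $(n,m) \neq (0,0)$ and trivial otherwise. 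The only genuinely new input in the whole argument is the injectivity of $\tMagma_{+} \to \SC_{+}$, i.e.\ that the unit relations built into $\tMagma_{+}$ are precisely those forced by composing with nullary elements in $\SC_{+}$; as explained above this reduces cleanly, by normal-forming away the nullary inputs, to the already-proved non-unitary freeness lemma, so the main obstacle is really just careful bookkeeping rather than any new idea.
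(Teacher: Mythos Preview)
Your first step contains a genuine error: there is \emph{no} operad morphism $\tMagma_{+}\to\SC_{+}$ sending the generators to $m_{\col c},m_{\col o},f$ and the nullary elements. In $\tMagma_{+}$ the nullary elements are imposed as strict units, $\mu_{\col o}(\id_{\col o},*_{\col o})=\id_{\col o}$, but in $\SC_{+}$ one has
\[
  m_{\col o}(\id_{\col o},*_{\col o})\;=\;
  \begin{gathered}\begin{tikzpicture}[scale=0.5]
    \draw (-1,0)--(1,0);\draw (1,0) arc (0:180:1);
    \draw (0,0) arc (0:180:0.5);\node at (-0.5,0.25){$1$};
  \end{tikzpicture}\end{gathered}
  \;\neq\;\id_{\col o},
\]
so the would-be morphism is not well-defined. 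Consequently your claim $\tMagma_{+}\cong\tMagma_{+}'$ is false, and the middle arrow of the zigzag cannot be an isomorphism as in the non-unitary \Cref{thm.model-sc}.

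The paper handles this by going the other way: one has an evident \emph{surjection} $\omega'_{+}:\tMagma_{+}'\to\tMagma_{+}$ sending generators to generators (well-defined precisely because $\tMagma_{+}$ has \emph{more} relations), and then $(\pi\SC_{+})_{\mid\tMagma_{+}'}$ is identified with the pullback of $\CoPB_{+}$ along $\omega_{+}\circ\omega'_{+}$. Surjectivity of $\omega'_{+}$ then gives a categorical equivalence (not an isomorphism) $(\pi\SC_{+})_{\mid\tMagma_{+}'}\qiso\PaPB_{+}$. Your arguments for the two outer equivalences are fine; only the injectivity/isomorphism claim in the middle needs to be replaced by this surjectivity/pullback argument.
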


\begin{remark}
  In $\tMagma_{+}'$, we have for example:
  \[ \mu_{\col{o}}(\id_{\col o}, *_{\col o}) =
    \begin{gathered} \begin{tikzpicture}
        \draw (-1,0) -- (1,0); \draw (1,0) arc (0:180:1);
        \draw (0,0) arc (0:180:0.5); \draw (-0.5, 0.25) node {1};
      \end{tikzpicture} \end{gathered}
    \neq
    \begin{gathered} \begin{tikzpicture}
        \draw (-1,0) -- (1,0); \draw (1,0) arc (0:180:1);
        \draw (0,0.5) node {1};
      \end{tikzpicture} \end{gathered}
    = \id_{\col{o}},
  \]
  but $\mu_{\col o}(*_{\col o}, *_{\col o}) = *_{\col o}$.  In other words, $*_{\col{o}}$ (and similarly $*_{\col{c}}$ for $\mu_{\col{c}}$) is not a strict unit for $*_{\col{o}}$, but is still idempotent.
\end{remark}

\begin{proof}
  There is an evident morphism $\omega_{+}' : \tMagma_{+}' \to \tMagma_{+}$ sending generators on generators, and we check directly that $\left( \pi \SC_{+} \right)_{\mid \tMagma_{+}'}$ is identified with the pullback $\omega_{+}' \PaPB'$. Since $\omega_{+}$ and $\omega_{+}'$ are both surjective, we obtain the two categorical equivalences $\left( \pi \SC_{+} \right)_{\mid \tMagma_{+}'} \qiso \PaPB_{+} \qiso \CoPB_{+}$.  And since $\tMagma_{+}'$ meets all connected components of $\SC_{+}$ we also have that the inclusion $\left( \pi \SC_{+} \right)_{\mid \tMagma_{+}'} \hookrightarrow \pi \SC_{+}$ is a categorical equivalence.
\end{proof}

The proof of the following proposition is a direct unitary extension of the proof of \Cref{cor.alg-papb} (one also needs to extend the definition of the Drinfeld center to unitary monoidal categories, cf.\ the given references):
\begin{proposition}
  An algebra over $\PaPB_{+}$ is given by:
  \begin{itemize}
  \item a monoidal category $(\cN, \otimes, \mathbbm{1}_{\cN})$ with a strict unit;
  \item a braided monoidal category $(\cM, \otimes, \mathbbm{1}_{\cM}, \tau)$ with a strict unit;
  \item a monoidal functor $F : \cM \to \ZZ(\cN)$ satisfying $F(\mathbbm{1}_{\cM}) = \mathbbm{1}_{\cN}$.
  \end{itemize}

  An algebra over $\CoPB_{+}$ is given by the same data, but where the two tensors products are strictly associative, strictly braided for the second, and the functor is strict braided monoidal.
\end{proposition}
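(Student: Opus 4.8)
The plan is to establish a unitary analogue of \Cref{thm.gen-rel-papb} and then to specialise it to the endomorphism operad, exactly as \Cref{cor.alg-papb} was deduced from \Cref{thm.gen-rel-papb}. Recall from the definition of $\PaPB_{+}=\omega_{+}^{*}\CoPB_{+}$ (and from \Cref{prop.zigzag-unit}) that $\ob\PaPB_{+}=\tMagma_{+}$, so that $\PaPB_{+}$ is generated, as an operad in groupoids, by the morphism generators $\alpha_{\col{c}}^{\pm1}$, $\alpha_{\col{o}}^{\pm1}$, $\tau^{\pm1}$, $p^{\pm1}$, $\psi^{\pm1}$ already used for $\PaPB$, together with the nullary objects $*_{\col{c}}\in\CoB_{+}(0)$ and $*_{\col{o}}\in\CoPB_{+}(0,0)$, subject to the object-level relations of $\tMagma_{+}$: $\mu_{\col{c}}(*_{\col{c}},\id_{\col{c}})=\mu_{\col{c}}(\id_{\col{c}},*_{\col{c}})=\id_{\col{c}}$, the analogues for $\mu_{\col{o}}$, and $f(*_{\col{c}})=*_{\col{o}}$. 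The unitary presentation statement to be proved is then \Cref{thm.gen-rel-papb} with $\PP$ additionally unitary, with two further distinguished nullary objects $e_{\col{c}},e_{\col{o}}$ in the data, and with the coherence diagrams of \Cref{fig.pentagons,fig.hexagons,fig.f-monoidal,fig.f-center,fig.f-braided,fig.f-monoid-center} supplemented by the strict-unit relations $m_{\col{c}}(e_{\col{c}},-)=m_{\col{c}}(-,e_{\col{c}})=\id$, $m_{\col{o}}(e_{\col{o}},-)=m_{\col{o}}(-,e_{\col{o}})=\id$ and $F(e_{\col{c}})=e_{\col{o}}$.

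For the ``if'' direction I would reuse, essentially verbatim, the decomposition in the proof of \Cref{thm.gen-rel-papb} (any morphism factors as $\mu'\circ X\circ\mu$ with $\mu,\mu'$ shuffle-type and $X=\mu_{\col{o}}(X^{\col{o}},f(X^{\col{c}}))$), replacing the presentations of $\PaB$ and $\PaP$ invoked there by the unitary ones of \cite[§I.6]{Fresse2016a}, which present $\PaB_{+}$ and $\PaP_{+}$ by the same generators and relations as $\PaB$, $\PaP$ together with the nullary elements and the strict-unit relations. The only new relations to be checked in the image are then those strict-unit relations, which hold by construction. One point worth isolating is that in $\PaB_{+}$ and $\PaP_{+}$ an associator or a braiding one of whose arguments is a nullary element is itself an identity, since unitary composition forgets the corresponding strand; hence no ``unit-compatibility'' coherence has to be added and the triangle axioms hold automatically.

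Specialising to the unitary endomorphism operad $\PP=\End_{(\cM,\cN)}$, an algebra over $\PaPB_{+}$ will be a morphism $\theta\colon\PaPB_{+}\to\End_{(\cM,\cN)}$, i.e.\ the data of $\otimes_{\cM}:=\theta(\mu_{\col{c}})$, $\otimes_{\cN}:=\theta(\mu_{\col{o}})$, $F:=\theta(f)$, the images of $\alpha_{\col{c}},\alpha_{\col{o}},\tau,p,\psi$, and the objects $\mathbbm{1}_{\cM}:=\theta(*_{\col{c}})$, $\mathbbm{1}_{\cN}:=\theta(*_{\col{o}})$. The $\tMagma_{+}$ relations translate exactly into the statement that $\mathbbm{1}_{\cM}$ and $\mathbbm{1}_{\cN}$ are strict units and that $F(\mathbbm{1}_{\cM})=\mathbbm{1}_{\cN}$, while the morphism generators and the coherence diagrams give, by \Cref{cor.alg-papb} with the Drinfeld center extended to unitary monoidal categories, a braided monoidal structure on $\cM$, a monoidal structure on $\cN$, and a strong braided monoidal functor $F\colon\cM\to\ZZ(\cN)$ — now all with strict units, the strict unit of $\ZZ(\cN)$ being $(\mathbbm{1}_{\cN},\id)$. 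Conversely such data yields $\theta$ by the ``if'' direction. The statement about $\CoPB_{+}$ will follow by discarding all parenthesisations, exactly as in the non-unitary proposition following \Cref{cor.alg-papb}: associators become identities, $\tau$ and $\psi$ become genuine strict braidings, and $F$ becomes strict braided monoidal.

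The main obstacle is not a single hard computation but rather bookkeeping of two kinds: (i) importing the unitary presentations of $\PaB_{+}$ and $\PaP_{+}$ from \cite{Fresse2016a} and verifying that the nullary relations of $\tMagma_{+}$ amount to strict unitality together with $F(\mathbbm{1}_{\cM})=\mathbbm{1}_{\cN}$ and nothing more, and (ii) setting up the Drinfeld center of a unitary (non-strict) monoidal category so that $\ZZ(\cN)$ is again unitary, with $(\mathbbm{1}_{\cN},\id)$ its strict unit; everything else is word-for-word the proof of \Cref{thm.gen-rel-papb}.
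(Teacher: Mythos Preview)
Your proposal is correct and matches the paper's approach: the paper's own proof is simply the one-line remark that the argument is ``a direct unitary extension of the proof of \Cref{cor.alg-papb} (one also needs to extend the definition of the Drinfeld center to unitary monoidal categories)'', and you have spelled out exactly what that extension entails. Your identification of the two bookkeeping points (importing the unitary presentations of $\PaB_{+}$, $\PaP_{+}$ and setting up the unitary Drinfeld center) is precisely what the paper leaves implicit.
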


\section{Chord diagrams}
\label{sec.chord-diagrams}

Let $\PP$ an operad in groupoids. Its completion $\widehat{\PP}$ is defined by the Malcev completion arity by arity:
\[ \widehat{\PP}(r) = \mathbb{G} \widehat{\Q[\PP(r)]}, \]
and it is an operad in complete groupoids \cite[§I.9]{Fresse2016a}. Here, $\Q[G]$ is the Hopf groupoid of the groupoid $G$; it has the same objects as $G$, and $\hom_{\Q[G]}(x,y) = \Q[\hom_{G}(x,y)]$ is the free $\Q$-module on the hom-set, equipped with a coalgebra structure where every generator is grouplike. It is completed at the augmentation ideal, and then the functor $\mathbb{G}$ extracts the grouplike elements to define an operad in complete groupoids. This completion is equipped with a canonical completion morphism $\PP \to \widehat{\PP}$.

\begin{definition}
  A morphism of operads in groupoids $\PP \to \QQ$ is called a \textbf{rational categorical equivalence} (denoted $\PP \xrightarrow{\sim_{\Q}} \QQ$) if the induced morphism $\widehat{\PP} \to \widehat{\QQ}$ is a categorical equivalence. We write $\PP \sim_{\Q} \QQ$  if $\PP$ and $\QQ$ can be connected by a zigzag of rational categorical equivalences.
\end{definition}

This definition is motivated by the following remark: if $A$ is an abelian group, then $\widehat{A} = A \otimes_{\mathbb{Z}} \Q$. Examples of rational categorical equivalences include categorical equivalences and the canonical completion morphisms $\PP \to \widehat{\PP}$. We refer to \cite[§I.9]{Fresse2016a} for more details.

\subsection{Drinfeld associators and chord diagrams}
\label{sec.drinf-assoc-chord}

\begin{definition}
  The Drinfeld--Kohno operad $\mathfrak{p}$ is an operad in Lie algebras,\footnote{The monoidal product in the category of Lie algebras is the direct sum.} where in each arity we have the presentation by generators and relations:
  \[ \mathfrak{p}(r) = \operadify{Lie}(t_{ij})_{1 \leq i \neq j \leq r} \bigm/ \bigl( [t_{ij}, t_{kl}], [t_{ik}, t_{ij} + t_{jk}] \bigr), \]
  and operadic composition is given by explicit formulas~\cite[§I.10.2]{Fresse2016a}.
\end{definition}

The universal enveloping algebra functor $\mathbb{U}$ being monoidal, $\mathbb{U}\mathfrak{p}$ is an operad in associative algebras. The algebra $\mathbb{U}\mathfrak{p}(r)$ is generated by chord diagrams with $r$ strands, and composition is given by insertion of a diagram (cf.\ ibid.\ for precise definitions).  We can complete $\mathfrak{p}$ with respect to the weight grading (the weight of $t_{ij}$ is defined to be $1$) to get an operad $\hat{\mathfrak{p}}$ in complete Lie algebras, and we can consider its completed universal enveloping algebra:

\begin{definition}
  The \textbf{operad of completed chord diagrams} $\CDh$ is an operad in groupoids given by $\ob \CDh(r) = *$ and $\Hom_{\CDh(r)}(*,*) = \mathbb{G} \hat{\mathbb{U}} \hat{\mathfrak{p}}(r)$. Operadic composition is induced by the one of $\mathfrak{p}$.
\end{definition}

These operads have unitary extensions: restriction operations forget strands of the chord diagrams, and if a chord was attached to the strand, the diagram is sent to $0$. We thus get unitary operads $\mathfrak{p}_{+}$, $\hat{\mathfrak{p}}_{+}$, and $\CDh_{+}$.

\begin{definition}
  A \textbf{Drinfeld associator} (with parameter $\mu \in \Q^{\times}$) is a morphism $\phi : \PaB_{+} \to \CDh_{+}$ of operads that sends the braiding $\tau \in \PaB_{+}(2)$ to $e^{\mu t_{12}/2} \in \CDh_{+}(2)$. We let $Ass^{\mu}(\Q)$ be the set of such associators.
\end{definition}

If $\phi \in Ass^{\mu}(\Q)$, then the formal series in two variables
\[ \Phi(t_{12}, t_{13}) := \phi(\alpha_{\col{c}}) \in \CDh_{+}(3) \cong \Q \llbracket t_{12}, t_{13} \rrbracket \]
is a Drinfeld associator in the usual sense, satisfying the usual equations (pentagon, hexagon), and vice versa. A Drinfeld associator $\phi$ extends to a categorical equivalence $\phi : \PaBh_{+} \qiso \CDh_{+}$, i.e.\ $\phi$ is a rational equivalence. The set $Ass^{\mu}(\Q)$ is a torsor under the action of the Grothendieck--Teichmüller group $GT^{1}(\Q)$, the group of automorphisms of $\PaBh_{+}$ fixing $\mu_{\col{c}}$ and $\tau$. A theorem of Drinfeld~\cite{Drinfeld1990} states that the set of associators $Ass^{1}(\Q)$ is nonempty.

We can also consider the operad $\PaCDhp$, which is the pullback of $\CDh_{+}$ along the terminal morphism $\magma \to \ob \CDh_{+} = *$. It is used to define the pro-unipotent version of the Grothendieck--Teichmüller group $GRT^{1}(\Q)$, under which $Ass^{\mu}(\Q)$ is a pro-torsor. We recall the following statement~\cite{Fresse2016a}, which is actually a general fact about pullback along morphisms from a free operad: each morphism $\phi : \PaBh_{+} \to \CDh_{+}$ admits a unique lifting
\[\begin{tikzcd}
    {} & \PaCDhp \dar \\
    \PaBh_{+} \urar[dashed]{\tilde\phi_{+}} \rar{\phi} & \CDh_{+}
  \end{tikzcd}\] which is given by the identity on the level of objects. If the morphism $\phi$ came from a Drinfeld associator, then this defines an isomorphism of operads in groupoids:
\[ \tilde\phi_{+} : \PaBh_{+} \xrightarrow{\cong} \PaCDhp. \]

\subsection{Shuffle of operads}
\label{sec.shuffle-operads}

By analogy with the decomposition of \Cref{fig.decomp-papb}, we define a new rational model in groupoids for $\pi \SC_{+}$ that involves the operad of chord diagrams.

\subsubsection{Other description of \texorpdfstring{$\CoPB_+$}{CoPB+}}
\label{sec.other-descr-copb}

\begin{definition}
  Let $\Pi$ be the permutation operad: $\Pi(n) = \Sigma_{n}$, and operadic composition is given by bloc composition of permutations. We also denote $\Pi$ the same operad seen as an operad in discrete groupoids. We also let $\Pi_{+}$ be its obvious unitary extension.
\end{definition}

The following operad is meant to represent the shuffle-type morphisms of \Cref{def.shuffle-type}:

\begin{definition}
  We define the relative (unitary) operad in groupoid $\Sh_{+}$ over $\Pi_{+}$. The set of objects of $\Sh_+(n,m)$ is $\mathrm{Sh}_{n,m} \times \Sigma_{n} \times \Sigma_{m}$, the same as $\CoPB_{+}$ (with the same graphical representation). Operadic composition on the object level is the same as that of $\CoPB_{+}$. On the level of morphisms:
  \[ \Hom_{\Sh_{+}(n,m)}((\mu, \sigma, \sigma'), (\nu, \tau, \tau')) =
    \begin{cases}
      * & \sigma = \tau, \sigma' = \tau', \\
      \varnothing & \text{otherwise},
    \end{cases}
  \]
  and we check that this gives a well-defined relative operad over $\Pi_{+}$ (i.e.\ there are no maps $* \to \varnothing$ to define, and all the maps $\cdot \to *$ are terminal maps).
\end{definition}

Graphically, we simply represent morphisms of $\Sh_{+}$ by an arrow between two bicolored configurations on the interval. Such an arrow exists iff the terrestrial (resp.\ aerial) points of the first configuration are in the same order as the terrestrial (resp.\ aerial) points of the second configuration, so we do not write the labels for the second configuration:
\[ \begin{tikzpicture}[xscale=0.8]
    \draw[|-|] (0,0) -- (6,0);
    \bdot{1,0}{2};
    \bdot{2,0}{1};
    \wdot{3,0}{1};
    \bdot{4,0}{3};
    \wdot{5,0}{2};
    \draw[|-|] (0,-2) -- (6,-2);
    \wdot{1,-2}{};
    \bdot{2,-2}{};
    \bdot{3,-2}{};
    \wdot{4,-2}{};
    \bdot{5,-2}{};
    \draw[->] (3,-0.3) -- (3,-1.7);
\end{tikzpicture} \]

\begin{remark}
  The symmetric groups $\Sigma_{n}$ and $\Sigma_{m}$ act on the left \emph{and} on the right on $\Sh_{+}(n,m)$ (by multiplication on respective factors).
\end{remark}

\begin{lemma}\label{prop.zeta}
  The groupoid $\CoPB_{+}(n,m)$ is isomorphic to $(\CoP_{+}(n) \times \CoB_{+}(m)) \times_{\Sigma_{n} \times \Sigma_{m}} \Sh_{+}(n,m)$.
\end{lemma}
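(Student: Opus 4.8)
The statement says that $\CoPB_+(n,m)$ splits, as a groupoid over $\Sigma_n \times \Sigma_m$, as the quotient $(\CoP_+(n) \times \CoB_+(m)) \times_{\Sigma_n \times \Sigma_m} \Sh_+(n,m)$. The geometric intuition is exactly the decomposition already used in the proof of \Cref{thm.gen-rel-papb} (see \Cref{fig.decomp-papb}): a morphism of $\CoPB_+$ factors as a shuffle-type morphism, followed by a ``product'' morphism which is the concatenation of a colored permutation of the terrestrial strands and a colored braid of the aerial strands, followed by another shuffle-type morphism. So the plan is to turn that decomposition into an honest isomorphism of groupoids.

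First I would fix notation for the quotient groupoid. The fiber product $(\CoP_+(n) \times \CoB_+(m)) \times_{\Sigma_n \times \Sigma_m} \Sh_+(n,m)$ is the groupoid whose objects are triples consisting of an object of $\CoP_+(n)$, an object of $\CoB_+(m)$, and an object of $\Sh_+(n,m)$ lying over the corresponding underlying permutations; because $\Sh_+(n,m)$ has object set $\mathrm{Sh}_{n,m}\times\Sigma_n\times\Sigma_m$ and the maps to $\Pi_+(n)=\Sigma_n$ and $\Pi_+(m)=\Sigma_m$ are the projections, an object of the quotient is the same as a shuffle $\mu\in\mathrm{Sh}_{n,m}$ together with a numbering of terrestrial and aerial points, i.e.\ precisely an element of $C^0_+(n,m)=\ob\CoPB_+(n,m)$. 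On morphisms, a morphism of the fiber product from $(a, b, s)$ to $(a', b', s')$ is a triple $(\alpha, \beta, \sigma)$ with $\alpha\in\CoP_+(n)$, $\beta\in\CoB_+(m)$ and $\sigma\in\Sh_+(n,m)$ with matching underlying permutations; but $\Sh_+(n,m)$ has at most one morphism between any two objects, so this $\sigma$-coordinate carries no information beyond compatibility, and a morphism of the quotient is a pair (colored permutation, colored braid) modulo the diagonal $\Sigma_n\times\Sigma_m$-action relating the two presentations of its source and target. Equivalently: a morphism is an equivalence class of quadruples (source object of $\CoPB_+$, target object of $\CoPB_+$, colored permutation between their underlying terrestrial data, colored braid between their underlying aerial data).

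Next I would define the functor $\CoPB_+(n,m) \to (\CoP_+(n)\times\CoB_+(m))\times_{\Sigma_n\times\Sigma_m}\Sh_+(n,m)$. On objects it is the identification above. On a morphism $Y$ of $\CoPB_+(n,m)$, I send $Y$ to the pair $(X^{\col o}, X^{\col c})$, where $X^{\col o}$ is obtained by forgetting all aerial strands (a colored permutation, i.e.\ an element of $\CoP_+(n)$) and $X^{\col c}$ is obtained by forgetting all terrestrial strands (a colored braid, i.e.\ an element of $\CoB_+(m)$), exactly as in the proof of \Cref{thm.gen-rel-papb}; one checks that these operations are functorial and that the underlying-permutation conditions are met, so this lands in the fiber product (and is well-defined into the quotient). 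Conversely, given $(X^{\col o}, X^{\col c})$ together with source and target objects, I reconstruct a morphism of $\CoPB_+(n,m)$ by the recipe of \Cref{fig.decomp-papb}: insert the shuffle-type morphisms $\mu$ and $\mu'$ at the ends and the ``product'' morphism $X=\mu_{\col o}(X^{\col o}, f(X^{\col c}))$ in the middle. The content is that (i) every morphism of $\CoPB_+$ admits such a decomposition, which holds because terrestrial strands never cross anything so the braid data and the permutation data are independent and can be untangled by a shuffle-type isotopy; (ii) two decompositions give the same morphism of $\CoPB_+$ iff the pairs $(X^{\col o}, X^{\col c})$ agree after the $\Sigma_n\times\Sigma_m$-identification — in one direction this is immediate, and in the other it is because the shuffle-type morphism between two objects with the same terrestrial order and the same aerial order is \emph{unique} (\Cref{def.shuffle-type}), so the ambiguity in the decomposition is exactly the relabeling, i.e.\ the diagonal group action being quotiented. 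Checking that the two functors are mutually inverse is then bookkeeping. Finally I would note the isomorphism is compatible with the $\Sigma_n\times\Sigma_m$-biactions (the remark preceding the lemma), so it is in fact an isomorphism of the relevant structured objects, and the unitary extension is automatic since adjoining the nullary operations only forgets strands on both sides.

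\textbf{Main obstacle.} The delicate point is (ii) above: proving that the induced map on hom-sets is a bijection, i.e.\ that two parenthesized/colored braids of $\CoPB_+$ with the same terrestrial-permutation part and the same aerial-braid part are actually equal. Equality of the ``product'' parts is clear; the work is in showing that the choice of the end shuffle-type morphisms is forced, which is where uniqueness of shuffle-type morphisms (and hence the fact that the fiber over a point of $\Sh_+$ contributes nothing) does the job. Concretely this amounts to the same ``untangling'' argument that underlies \Cref{thm.gen-rel-papb}, so much of it can be cited rather than redone; the rest is a direct verification that the decomposition $Y = \mu'\circ X\circ\mu$ is well-defined up to the diagonal action and depends on $Y$ alone.
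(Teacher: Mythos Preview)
Your proposal is correct and follows essentially the same approach as the paper. The paper defines the map $\zeta$ in the direction (product) $\to \CoPB_+$ via the same shuffle--product--shuffle recipe you describe (your inverse map), and then simply asserts that it is a bijection on morphisms; you additionally spell out the other direction (forget aerial/terrestrial strands) and explain why uniqueness of shuffle-type morphisms makes the two constructions mutually inverse. One small terminological point: the symbol $\times_{\Sigma_n\times\Sigma_m}$ here denotes a balanced product (quotient by the diagonal action, as in the bracket notation $[u,x,\mu]$ and the remark on the biaction of $\Sigma_n\times\Sigma_m$), not a fiber product over $\Pi_+$; your argument already treats it that way (``modulo the diagonal action''), so just adjust the wording.
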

\begin{proof}
  We define:
  \[ \zeta : (\CoP_{+}(n) \times \CoB_{+}(m)) \times_{\Sigma_{n} \times \Sigma_{m}} \Sh_{+}(n,m) \to \CoPB_{+}(n,m) \]
  by a graphical calculus:
  \begin{center}
    \def\svgwidth{\textwidth} %% Creator: Inkscape 0.91_64bit, www.inkscape.org
%% PDF/EPS/PS + LaTeX output extension by Johan Engelen, 2010
%% Accompanies image file 'zeta.pdf' (pdf, eps, ps)
%%
%% To include the image in your LaTeX document, write
%%   \input{<filename>.pdf_tex}
%%  instead of
%%   \includegraphics{<filename>.pdf}
%% To scale the image, write
%%   \def\svgwidth{<desired width>}
%%   \input{<filename>.pdf_tex}
%%  instead of
%%   \includegraphics[width=<desired width>]{<filename>.pdf}
%%
%% Images with a different path to the parent latex file can
%% be accessed with the `import' package (which may need to be
%% installed) using
%%   \usepackage{import}
%% in the preamble, and then including the image with
%%   \import{<path to file>}{<filename>.pdf_tex}
%% Alternatively, one can specify
%%   \graphicspath{{<path to file>/}}
%% 
%% For more information, please see info/svg-inkscape on CTAN:
%%   http://tug.ctan.org/tex-archive/info/svg-inkscape
%%
\begingroup%
  \makeatletter%
  \providecommand\color[2][]{%
    \errmessage{(Inkscape) Color is used for the text in Inkscape, but the package 'color.sty' is not loaded}%
    \renewcommand\color[2][]{}%
  }%
  \providecommand\transparent[1]{%
    \errmessage{(Inkscape) Transparency is used (non-zero) for the text in Inkscape, but the package 'transparent.sty' is not loaded}%
    \renewcommand\transparent[1]{}%
  }%
  \providecommand\rotatebox[2]{#2}%
  \ifx\svgwidth\undefined%
    \setlength{\unitlength}{381.648456bp}%
    \ifx\svgscale\undefined%
      \relax%
    \else%
      \setlength{\unitlength}{\unitlength * \real{\svgscale}}%
    \fi%
  \else%
    \setlength{\unitlength}{\svgwidth}%
  \fi%
  \global\let\svgwidth\undefined%
  \global\let\svgscale\undefined%
  \makeatother%
  \begin{picture}(1,0.362813)%
    \put(0,0){\includegraphics[width=\unitlength,page=1]{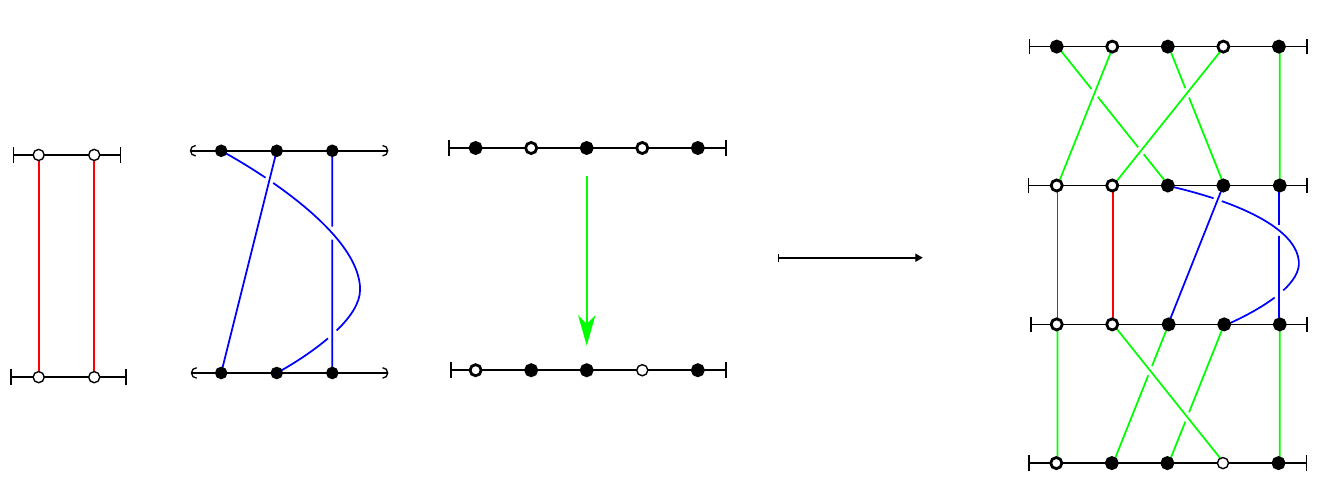}}%
    \put(0.106736,0.153694){\color[rgb]{0,0,0}\makebox(0,0)[lb]{\smash{$\times$}}}%
    \put(0.304824,0.152646){\color[rgb]{0,0,0}\makebox(0,0)[lb]{\smash{$\times$}}}%
    \put(0.162285,0.266887){\color[rgb]{0,0,0}\makebox(0,0)[lb]{\smash{$2$}}}%
    \put(0.204208,0.266887){\color[rgb]{0,0,0}\makebox(0,0)[lb]{\smash{$1$}}}%
    \put(0.246131,0.266887){\color[rgb]{0,0,0}\makebox(0,0)[lb]{\smash{$3$}}}%
    \put(0.021841,0.261647){\color[rgb]{0,0,0}\makebox(0,0)[lb]{\smash{$1$}}}%
    \put(0.065861,0.261647){\color[rgb]{0,0,0}\makebox(0,0)[lb]{\smash{$2$}}}%
    \put(0.353036,0.267935){\color[rgb]{0,0,0}\makebox(0,0)[lb]{\smash{$1$}}}%
    \put(0.39496,0.267935){\color[rgb]{0,0,0}\makebox(0,0)[lb]{\smash{$2$}}}%
    \put(0.434787,0.267935){\color[rgb]{0,0,0}\makebox(0,0)[lb]{\smash{$3$}}}%
    \put(0.478806,0.267935){\color[rgb]{0,0,0}\makebox(0,0)[lb]{\smash{$1$}}}%
    \put(0.52073,0.263743){\color[rgb]{0,0,0}\makebox(0,0)[lb]{\smash{$2$}}}%
    \put(0.791136,0.341301){\color[rgb]{0,0,0}\makebox(0,0)[lb]{\smash{$2$}}}%
    \put(0.833059,0.341301){\color[rgb]{0,0,0}\makebox(0,0)[lb]{\smash{$2$}}}%
    \put(0.877079,0.341301){\color[rgb]{0,0,0}\makebox(0,0)[lb]{\smash{$3$}}}%
    \put(0.916906,0.341301){\color[rgb]{0,0,0}\makebox(0,0)[lb]{\smash{$1$}}}%
    \put(0.963022,0.341301){\color[rgb]{0,0,0}\makebox(0,0)[lb]{\smash{$1$}}}%
  \end{picture}%
\endgroup%

  \end{center}

  Concretely, on objects, we define:
  \begin{align*}
    \zeta
    & : \ob \bigl( \CoP_{+}(n) \times \CoB_{+}(m) \times_{\Sigma_{n} \times \Sigma_{m}} \Sh_{+}(n,m) \bigr) \\
    & \xrightarrow{=} (\Sigma_{n} \times \Sigma_{m}) \times_{\Sigma_{n} \times \Sigma_{m}} \ob \CoPB(n,m) \\
    & \xrightarrow{\cong} \ob \CoPB(n,m).
  \end{align*}

  On morphisms, $\zeta[u,x,\mu]$ (for $u \in \CoP_{+}(n)$, $x \in \CoB_{+}(m)$, $\mu \in \Sh_{+}(n,m)$) is the composition of the unique shuffle type morphism that brings all terrestrial points to the left, then the concatenation of $u$ and $x$, then the unique shuffle-type morphism that brings ground point to their places. We thus get a well-defined (up to isotopy) braid, and it is easy to see that this gives a bijection on morphisms.
\end{proof}

On can thus transport the operadic composition, which will serve as inspiration for \Cref{eq.op-papb-var} to come.

\subsubsection{A variation on \texorpdfstring{$\PaPB_+$}{PaPB+}}
\label{sec.variation-papb_+}

We first define a new operad $\PaPB'_{+}$, a minor variation on $\PaPB_{+}$.

\begin{definition}
  Let $\omega_{+} : \tMagma_{+} \to \ob \CoPB_{+} \cong \ob \Sh_{+}$ be the morphism of \Cref{def.papb}. We define $\PaSh_{+}$ to be the pullback of $\Sh_{+}$ along $\omega$.
\end{definition}

\begin{remark}
  There is a function of sets $U : \ob \PaPB_{+}(0,m) \to \ob \PaB_{+}(m)$ that forgets the second level of parenthesizing.
\end{remark}

\begin{definition}
  Let
  \[ \PaPB'_{+}(n,m) \subset (\PaP_{+}(n) \times \PaB_{+}(m)) \times_{\Sigma_{n} \times \Sigma_{m}} \PaSh_{+}(n,m), \]
  be the full subgroupoid whose objects $[u,x,\mu]$ such that there exists a permutation $\tau \in \Sigma_{m}$ satisfying $U(\mu(*_{\col{o}}, \dots, *_{\col{o}})) = x \cdot \tau$ (this does not depend on the choice of a representative for the coinvariants).
\end{definition}

\begin{example}
  For example,
  \begin{gather*}
    \bigl[
    \begin{tikzpicture}
      \draw[|-|] (0,0) -- (1,0);
      \wdot{0.5,0}{1};
    \end{tikzpicture}
    \times 
    \begin{tikzpicture}[xscale=0.5]
      \draw[|-|] (0,0) -- (4,0);
      \draw[|-|] (2,0) -- (4,0);
      \bdot{1,0}{2}; 
      \draw[|-|] (3,0) -- (4,0);
      \bdot{2.5,0}{1};
      \bdot{3.5,0}{3};
    \end{tikzpicture}
    \times
    \begin{tikzpicture}[xscale=0.75]
      \draw[|-|] (0,0) -- (4,0);
      \draw[|-|] (1,0) -- (2,0);
      \draw[|-] (3,0) -- (4,0);
      \draw[cob] (0.1,0) -- (0.9,0);
      \bdot{0.5,0}{1};
      \wdot{1.5,0}{1};
      \draw[cob] (2.1,0) -- (3.9,0);
      \bdot{2.5,0}{2};
      \bdot{3.5,0}{3};
    \end{tikzpicture}
    \bigr] \in \ob \PaPB'(1,3) \text{, but}
    \\
    \bigl[ *_{\col{o}} \times
    \begin{tikzpicture}[xscale=0.5]
      \draw[|-|] (0,0) -- (4,0);
      \draw[|-|] (2,0) -- (4,0);
      \bdot{0.5,0}{2};
      \draw[|-|] (1,0) -- (2,0);
      \bdot{1.5,0}{1};
      \bdot{3,0}{3};
    \end{tikzpicture}
    \times
    \begin{tikzpicture}[xscale=0.5]
      \draw[|-|] (0,0) -- (4,0);
      \draw[|-|] (2,0) -- (4,0);
      \draw[cob] (0.1,0) -- (1.9,0);
      \bdot{1,0}{1};
      \draw[cob] (2.1,0) -- (3.9,0);
      \draw[|-|] (3,0) -- (4,0);
      \bdot{2.5,0}{2};
      \bdot{3.5,0}{3};
    \end{tikzpicture}
    \bigr] \not\in \ob \PaPB'(0,3).
  \end{gather*}
\end{example}

\begin{lemma}
  The symmetric sequence $\PaPB_{+}'(n)$ is a right module over $\PaB_{+}$, given by:
  \begin{align*}
    \circ_{i}^{\col{c}} : \PaPB_{+}'(n,m) \times \PaB_{+}(k)
    & \to \PaPB_{+}'(n, m+k-1) \\
    [u,x,\mu] \times y
    & \mapsto [u, x \circ_{i} y, \mu \circ_{i}^{\col{c}}
      1_{\Sigma_{k}} ],
  \end{align*}
  where $1_{\Sigma_{k}}$ is seen as a morphism in $\omega^{*}\Pi_{+}(k)$ between the source of $y$ and the target of $y$.
\end{lemma}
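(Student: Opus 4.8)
The plan is to verify directly that the proposed formula $[u,x,\mu] \times y \mapsto [u, x \circ_i y, \mu \circ_i^{\col{c}} 1_{\Sigma_k}]$ is well-defined, lands in $\PaPB_{+}'$, and satisfies the right-module axioms. First I would check well-definedness with respect to the $\Sigma_n \times \Sigma_m$-coinvariants: replacing $(u,x,\mu)$ by $(u \cdot \sigma, x \cdot \sigma', \sigma^{-1}\mu\sigma'^{-1})$ (in the appropriate sense) must produce a representative equivalent to the original image. Since $\circ_i$ on $\PaB_{+}$ is $\Sigma$-equivariant in the standard blockwise way and $\circ_i^{\col{c}}$ on $\PaSh_{+}$ is induced from the operad structure on $\Pi_{+}$ which is likewise blockwise, the permutation bookkeeping matches; this is the kind of routine check that the excerpt itself signals ``does not depend on the choice of a representative''. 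I would also confirm that $1_{\Sigma_k}$ genuinely defines a morphism in $\omega^*\Pi_{+}(k)$ from the source of $y$ to the target of $y$ — this uses that $\Pi_{+}$ is discrete so the only constraint is that source and target of $y$ have the same underlying permutation, which holds because morphisms of $\PaB_{+}$ cover braids projecting to a fixed permutation.

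Next I would verify that the output lies in the subgroupoid $\PaPB_{+}'(n, m+k-1)$, i.e.\ that the defining condition ``there exists $\tau' \in \Sigma_{m+k-1}$ with $U((\mu \circ_i^{\col{c}} 1_{\Sigma_k})(*_{\col{o}}, \dots, *_{\col{o}})) = (x \circ_i y) \cdot \tau'$'' holds. Here I would use that $[u,x,\mu] \in \PaPB_{+}'$ already supplies a $\tau \in \Sigma_m$ with $U(\mu(*_{\col o},\dots,*_{\col o})) = x \cdot \tau$, and that $U$ and the operad composition $\circ_i^{\col{c}}$ are compatible in the sense that forgetting the inner parenthesizing commutes with insertion (this is essentially the function $U$ of the preceding remark, applied arity by arity); the required $\tau'$ is then assembled from $\tau$ and the trivial permutation coming from $1_{\Sigma_k}$, blockwise at position $i$. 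This step is conceptually the most delicate one: one must be careful that the ``second level of parenthesizing'' forgotten by $U$ interacts correctly with the insertion of the trivial shuffle-morphism $1_{\Sigma_k}$, since $\PaSh_{+}$ records parenthesizings via $\tMagma_{+}$ but its morphisms are shuffle-type only.

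Finally I would check the right-module axioms: associativity $(\circ_i^{\col c}$ composed with $\circ_j^{\col c})$ and unitality with respect to $1 \in \PaB_{+}(1)$. Both reduce to the corresponding identities in $\PaB_{+}$ (for the middle factor $x$) and in $\omega^*\Pi_{+}$ (for the shuffle factor $\mu$), since the first factor $u \in \PaP_{+}(n)$ is untouched; the $\Sigma_n \times \Sigma_m$-equivariance needed to make the axioms hold at the level of coinvariants follows from the equivariance already built into the fibered product. I expect the main obstacle to be the second step — ensuring the image actually satisfies the $\PaPB_{+}'$ membership condition — while the rest is a bookkeeping exercise parallel to the analogous module structure on $\CoPB_{+}$ transported through the isomorphism $\zeta$ of \Cref{prop.zeta}, which can be invoked to motivate and cross-check the formula.
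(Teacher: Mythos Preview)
Your overall strategy---direct verification that the formula respects the coinvariants, lands in $\PaPB_{+}'$, and satisfies the module axioms---is exactly the paper's approach. The paper's proof is two lines: it records that $\PaB_{+}$ is a right module over itself and that $\PaSh_{+}(n)$ is a right module over $\magma_{+}$, then says one checks the formula directly. Your outline just unpacks what that direct check entails.

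There is, however, a concrete error in one step. You justify that $1_{\Sigma_k}$ defines a morphism in $\omega^{*}\Pi_{+}(k)$ from $\operatorname{src}(y)$ to $\operatorname{tgt}(y)$ by asserting that ``source and target of $y$ have the same underlying permutation, which holds because morphisms of $\PaB_{+}$ cover braids projecting to a fixed permutation''. This is false: take the braiding $\tau \in \PaB_{+}(2)$, whose source $\mu_{\col c}(x_1,x_2)$ has underlying permutation $\mathrm{id}$ and whose target $\mu_{\col c}(x_2,x_1)$ has underlying permutation the transposition. Morphisms of $\PaB_{+}$ are arbitrary (not pure) braids, so source and target can carry different permutations.

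The paper does not attempt to justify this point either; its proof invokes only the right $\magma_{+}$-module structure on $\PaSh_{+}(n)$, i.e.\ insertion of a parenthesized word at the level of \emph{objects}, and leaves the morphism-level interpretation of the $\PaSh_{+}$ slot implicit. The honest reading of the formula is: on objects one inserts the object underlying $y$ into $\mu$ via the $\tMagma_{+}$-composition, and the braid data of $y$ is carried entirely by the $\PaB_{+}$ factor $x \circ_i y$; the shuffle factor only records positions and parenthesizations. Your argument for that step should be rewritten along these lines, but the remaining steps (coinvariant well-definedness, the membership condition via $U$, and the module axioms reducing to those of $\PaB_{+}$ and $\PaSh_{+}$) are sound and match the paper.
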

\begin{proof}
  The operad $\PaB_{+}$ is a right module over itself, and $\PaSh_{+}(n)$ is a right module over $\magma_{+}$. One can then directly check that the above formula defines a right module structure over $\PaB_{+}$.
\end{proof}

\begin{definition}
  Let $\PP$ be an operad in some symmetric monoidal category. The \textbf{shifted operad} $\PP[\cdot]$ \cite[§10.1]{Fresse2009} is an operad in right modules over $\PP$ (i.e.\ an operad relative over $\PP$), given by $\PP[n](m) = \PP(n+m)$, and the structure maps are induced by the operad structure of $\PP$ (cf.\ ibid.\ for explicit formulas).
\end{definition}

To define the operad structure of $\PaPB'_{+}$, we first define a morphism of colored collections $\rho : \PaPB'_{+} \to \PaB[\cdot]$, that will be similar to the definition of $\zeta$~\ref{prop.zeta}. It is again defined in a graphical way, see \Cref{fig.rho} (the starred numbers correspond to shifted entries). The precise definition involves the inclusion $\iota : \PaP_{+} \hookrightarrow \PaB_{+}$, concatenation, the functor $\PaSh_{+} \to \PaB_{+}$ (picking the unique shuffle-type morphism when it exists), $\eta : \PaP_{+} \to \PaP_{+}[\cdot]$ (where $\eta : \PaP_{+}(n) \cong \PaP_{+}[n](0)$), as well as $U$ on objects -- see \Cref{eq.def-rho}.

\begin{figure}[htbp]
  \centering
  \def\svgwidth{\textwidth} \import{fig/}{rho.pdf_tex}
  \caption{Definition of $\rho : \PaPB'_{+}(2,3) \to \PaB_{+}[2](3)$}
  \label{fig.rho}
\end{figure}

The operad structure is then defined by (where $\sigma \in \Sigma_{m}$ is such that $U(\mu(*_{\col{o}}, \dots, *_{\col{o}})) = x \cdot \sigma$, and where $y_{i}$ is seen as an element of $\PaPB'_{+}[0](l_{i})$):
\begin{align}
  \gamma : \PaPB_+'(r,s) \times \PaPB_+'(k_{1}, l_{1}) \times \dots \times \PaPB_+'(k_{r}, l_{r})
  & \to \PaPB_+'(\sum k_{i}, s + \sum l_{i})
    \nonumber \\
  [u,x,\mu] \times [v_{1}, y_{1}, \sigma_{1}] \times \dots \times
  [v_{r}, y_{r}, \sigma_{r}]
  & \nonumber \\
  \mapsto \bigl[ u(v_{1}, \dots, v_{r}), \sigma^{-1} \cdot
  \underbrace{\rho[u,x,\mu]}_{\PaPB'_{+}[r](s)} (y_{1},
  \dots, y_{r})
  &, \mu(\sigma_{1}, \dots, \sigma_{r}) \bigr]
    \label{eq.op-papb-var}
\end{align}
and where the identity of the operad is $\id = \bigl[ \tikz{\draw[|-|] (0,0) -- (1,0); \wdot{0.5,0}{1};} \times *_{\col{c}} \times \tikz{\draw[|-|] (0,0) -- (1,0); \wdot{0.5,0}{1};} \bigr] \in \PaPB_{+}'(1,0)$.

\begin{proposition} \label{prop.papb-var-operade}
  Given this operadic composition, this identity and this right $\PaB_{+}$-module structure, $\PaPB_{+}'$ is an operad relative over $\PaB_{+}$.
\end{proposition}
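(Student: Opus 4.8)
The plan is to verify the relative-operad axioms for $\PaPB_{+}'$ by hand, exploiting the fact that an object (and a morphism) of $\PaPB_{+}'(n,m)$ is a class $[u,x,\mu]$ sitting inside $(\PaP_{+}(n)\times\PaB_{+}(m))\times_{\Sigma_{n}\times\Sigma_{m}}\PaSh_{+}(n,m)$, and that the composition \eqref{eq.op-papb-var} is \emph{almost} coordinatewise: on the first coordinate it is the operadic composition of the operad $\PaP_{+}$, on the third coordinate it is the operadic composition of the relative operad $\PaSh_{+}$ over $\omega^{*}\Pi_{+}$, and only on the middle (braid) coordinate does a new ingredient appear, namely the composition of the shifted operad $\PaB_{+}[\cdot]$ --- a relative operad over $\PaB_{+}$ by \cite[§10.1]{Fresse2009} --- applied to $\rho[u,x,\mu]$ and to the braid parts of the $[v_{i},y_{i},\sigma_{i}]$, followed by the corrective action of $\sigma^{-1}$. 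The argument runs parallel to the way the operadic composition of $\CoPB_{+}$ is transported along $\zeta$ in \Cref{prop.zeta}.

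First I would check that $\gamma$ is well defined. Concretely: the right-hand side of \eqref{eq.op-papb-var} is independent of the chosen representatives for the $\Sigma_{n}\times\Sigma_{m}$-coinvariants (using the left and right equivariance of $\PaSh_{+}$ under $\Sigma_{n}\times\Sigma_{m}$, the equivariance of the shifted operad $\PaB_{+}[\cdot]$, and the equivariance built into the definition of $\rho$ in \Cref{fig.rho}); the corrective permutation $\sigma$ with $U(\mu(*_{\col{o}},\dots,*_{\col{o}}))=x\cdot\sigma$ exists precisely because $[u,x,\mu]\in\PaPB_{+}'$; and the output triple again satisfies the defining condition of $\PaPB_{+}'$, i.e.\ after applying $U$ its braid and shuffle coordinates differ by a permutation. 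This last point follows from the definition of $\rho$ together with the compatibility of $U$ with the operadic structures on $\PaSh_{+}$ and $\PaB_{+}$.

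Next I would establish the multiplicativity of $\rho$ with respect to $\gamma$: that $\rho$ of a composite \eqref{eq.op-papb-var} is, up to the relevant corrective permutations, the composite of the $\rho$'s of the inputs formed in the shifted operad $\PaB_{+}[\cdot]$. This is the analogue for $\rho$ of the graphical identity underlying $\zeta$ in the proof of \Cref{prop.zeta}, and it is proved by the same graphical calculus (\Cref{fig.rho}): inserting braids that carry shuffle data and then forgetting the terrestrial/aerial distinction agrees with first forgetting and then inserting, once one accounts for the shuffle that gathers the terrestrial points to the left. Granting this identity, associativity and unitality of $\gamma$ reduce, coordinate by coordinate, to the corresponding axioms in $\PaP_{+}$, in $\PaSh_{+}$, and in $\PaB_{+}[\cdot]$; the only genuine bookkeeping is to check that the corrective permutations produced by the inner and by the outer composition combine into the corrective permutation of the total composite. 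Unitality for the stated identity element is checked on each of the three coordinates by inspection.

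Finally I would verify the equivariance of $\gamma$ under the $\Sigma_{n}$- and $\Sigma_{m}$-actions and its compatibility with the right $\PaB_{+}$-module structure of the preceding lemma, i.e.\ that $\gamma$ is a morphism of right $\PaB_{+}$-modules; this is exactly the statement that $\{\PaPB_{+}'(n)\}_{n}$ is an operad in the category of right $\PaB_{+}$-modules, which is the definition of a relative operad over $\PaB_{+}$. I expect the main obstacle to be the multiplicativity identity for $\rho$ together with the tracking of the corrective permutations $\sigma$ through an iterated (triple) composite; once these are in place, everything else is a routine transcription of the operad axioms of the three building blocks $\PaP_{+}$, $\PaSh_{+}$, and $\PaB_{+}[\cdot]$.
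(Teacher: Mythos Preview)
Your proposal is correct and follows essentially the same approach as the paper: both verify well-definedness via the corrective permutation $\sigma^{-1}$, then reduce equivariance, unitality, compatibility with the right $\PaB_{+}$-module structure, and associativity coordinatewise to the corresponding axioms in $\PaP_{+}$, $\PaSh_{+}$, and the shifted operad $\PaB_{+}[\cdot]$. You are somewhat more explicit about isolating a ``multiplicativity of $\rho$'' identity and about checking that the output of $\gamma$ lands back in $\PaPB_{+}'$, whereas the paper absorbs these into the phrases ``from the condition on the objects of $\PaPB'_{+}$'' and ``since $\PaB_{+}[\cdot]$ is an operad''; but the underlying argument is the same.
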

\begin{proof}
  The $\sigma^{-1}$ in the formula ensures that this $\gamma$ is well-defined (it does not depend on the representative in the coinvariants). The fact that $\gamma$ is equivariant is a direct consequence of the fact that the operad structures of $\PaP_{+}$, $\PaSh_{+}$ and $\PaB_{+}[\cdot]$ are equivariant.

  Let $[u,x,\mu] \in \PaPB'_{+}(n,m)$. The identity $\id([u,x,\mu]) = [u,x,\mu]$ is immediate by definition, and from the condition on the objects of $\PaPB'_{+}$, one can also show the identity $[u,x,\mu](\id, \dots, \id) = [u,x,\mu]$.

  To see that $\gamma$ is a morphism of right $\PaB_{+}$-modules, it is enough to have the identity $\rho[u,x,\mu](y_{1}, \dots, y_{j} \circ_{i} z, \dots, y_{r}) = \rho[u,x,\mu](y_{1}, \dots, y_{r}) \circ_{l_{1} + \dots + l_{j-1} + i} z$ for $z \in \PaB_{+}(m)$; but since $\PaB_{+}[\cdot]$ is an operad, this identity is satisfied.

  Finally, associativity of $\gamma$ follows from the condition on objects (to show that $\ob \gamma$ is associative), and from the fact that $\PaB_{+}[\cdot]$ is an operad (to show associativity on morphisms).
\end{proof}

\begin{proposition} \label{prop.papb-prime}
  There exists a categorical equivalence $\PaPB_{+}' \qiso \CoPB_{+}$.
\end{proposition}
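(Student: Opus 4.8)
The plan is to write down the obvious comparison functor $\PaPB'_{+} \to \CoPB_{+}$ coming from the three ``forget the parenthesizing'' projections, and to check that it is a morphism of operads which is an equivalence of categories in each arity. Recall from \Cref{prop.zeta} that $\CoPB_{+}(n,m) \cong (\CoP_{+}(n) \times \CoB_{+}(m)) \times_{\Sigma_{n} \times \Sigma_{m}} \Sh_{+}(n,m)$, and that by definition $\PaPB'_{+}(n,m)$ is a full subgroupoid of the analogous balanced product $(\PaP_{+}(n) \times \PaB_{+}(m)) \times_{\Sigma_{n} \times \Sigma_{m}} \PaSh_{+}(n,m)$. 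The projections $\PaP_{+} \to \CoP_{+}$, $\PaB_{+} \to \CoB_{+}$ and $\PaSh_{+} \to \Sh_{+}$ are all categorical equivalences (the first two because $\PaP_{+}$, resp.\ $\PaB_{+}$, is the pullback of $\CoP_{+}$, resp.\ $\CoB_{+}$, along a surjective-on-objects morphism, cf.\ \cite{Fresse2016a}; the third because $\PaSh_{+} = \omega^{*}\Sh_{+}$ and $\omega_{+}$ is surjective), and they are visibly compatible with the $\Sigma_{n} \times \Sigma_{m}$-actions. Restricting the induced functor on balanced products to $\PaPB'_{+}(n,m)$ and composing with the isomorphism of \Cref{prop.zeta} gives a functor $\lambda : \PaPB'_{+} \to \CoPB_{+}$.

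I would first check that $\lambda$ is a morphism of operads relative over the projection $\PaB_{+} \to \CoB_{+}$. This holds essentially by construction: the operadic composition \eqref{eq.op-papb-var} of $\PaPB'_{+}$ was designed (via $\rho$, in imitation of $\zeta$) to mirror the composition transported onto $(\CoP_{+} \times \CoB_{+}) \times_{\Sigma_{n} \times \Sigma_{m}} \Sh_{+}$ from $\CoPB_{+}$ along $\zeta$, and unwinding the two formulas --- the twist by $\sigma^{-1}$, the use of the shifted operad $\PaB_{+}[\cdot]$, the right $\PaB_{+}$-module maps $\circ_{i}^{\col{c}}$ --- shows they agree after applying the projections, so $\lambda$ intertwines all structure maps. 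Next, $\lambda$ is even surjective on objects in each arity: given $[u_{0}, x_{0}, \mu_{0}] \in \ob \CoPB_{+}(n,m)$, choose a parenthesized permutation $u \in \magma_{+}(n)$ lifting $u_{0}$, a lift $\mu \in \ob \PaSh_{+}(n,m) = \ob \tMagma_{+}(n,m)$ of $\mu_{0}$ (possible since $\omega_{+}$ is surjective), and put $x := U(\mu(*_{\col{o}}, \dots, *_{\col{o}})) \in \magma_{+}(m)$; then $[u,x,\mu] \in \PaPB'_{+}(n,m)$ (the defining condition holds with $\tau = \id$) and $\lambda[u,x,\mu] = [u_{0}, x_{0}, \mu_{0}]$. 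In particular $\lambda$ is essentially surjective.

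It remains to prove full faithfulness in each arity. The point is that in each of $\PaP_{+}(n)$, $\CoP_{+}(n)$, $\PaSh_{+}(n,m)$ and $\Sh_{+}(n,m)$ there is at most one morphism between two fixed objects, and the projections match these bijectively (coherence of monoidal categories for $\PaP_{+} \to \CoP_{+}$; immediate from the definitions for $\PaSh_{+} \to \Sh_{+}$). Hence, modulo the $\Sigma_{n} \times \Sigma_{m}$-coinvariants, a morphism of $\PaPB'_{+}(n,m)$ between two objects is precisely the datum of a morphism of $\PaB_{+}(m)$ between the braid components (together with the forced reparenthesizing and shuffle morphisms on the other two factors, when they exist), and likewise, by \Cref{prop.zeta}, a morphism of $\CoPB_{+}(n,m)$ is the datum of a morphism of $\CoB_{+}(m)$. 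Since $\PaB_{+}(m) \to \CoB_{+}(m)$ is a categorical equivalence it is bijective on the relevant hom-sets; and the condition defining $\PaPB'_{+}$, together with the coinvariants, is exactly what guarantees that a hom-set on the $\PaPB'_{+}$ side is nonempty iff the corresponding one on the $\CoPB_{+}$ side is. Therefore $\lambda$ is bijective on every hom-set, and with essential surjectivity it is a categorical equivalence.

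The main obstacle is this last step: one must keep track of the two $\Sigma_{n} \times \Sigma_{m}$-balanced products at once and verify that passing to coinvariants neither collapses nor creates morphisms uncontrollably, i.e.\ that the extra condition in the definition of $\PaPB'_{+}$ is exactly strong enough --- and no stronger --- to rigidify the balanced product so that it matches $\CoPB_{+}$ on the nose at the level of iso-classes of objects and of hom-sets. An alternative packaging that trades this bookkeeping for a general lemma: prove that a balanced product over a finite group acting freely on objects carries an arity-wise categorical equivalence of the two inputs to a categorical equivalence, deduce that the full balanced product $(\PaP_{+} \times \PaB_{+}) \times_{\Sigma_{n} \times \Sigma_{m}} \PaSh_{+}$ is arity-wise equivalent to $\CoPB_{+}$, and then observe that $\PaPB'_{+}$ is a full suboperad meeting every iso-class of objects that maps onto a given iso-class of $\CoPB_{+}$, so that the restriction of the equivalence is still one.
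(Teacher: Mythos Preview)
Your proposal is correct and follows essentially the same approach as the paper: define the comparison map as the restriction to $\PaPB'_{+}$ of the composite of the three ``forget parenthesizing'' projections with the isomorphism $\zeta$ of \Cref{prop.zeta}, observe that it is an operad map by design of \eqref{eq.op-papb-var}, and conclude categorical equivalence from that of the three factors. In fact you are more careful than the paper on one point: the paper simply asserts that the product of the three equivalences is an equivalence and stops there, whereas you rightly flag that one must also check that restricting this equivalence to the full subgroupoid $\PaPB'_{+}$ still gives an equivalence (essential surjectivity after restriction), and you supply the explicit lift $x := U(\mu(*_{\col{o}}, \dots, *_{\col{o}}))$ that makes this work.
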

\begin{proof}
  This equivalence is given in arity $(n,m)$ by the restriction to $\PaPB'_{+}(n,m)$ of the composite:
  \begin{align*}
    & (\PaP_{+}(n) \times \PaB_{+}(m)) \times_{\Sigma_{n} \times
    \Sigma_{m}} \PaSh_{+}(n,m)
    \\
    & \to (\CoP_{+}(n) \times \CoB_{+}(m)) \times_{\Sigma_{n} \times \Sigma_{m}} \Sh_{+}(n,m)
    \\
    & \xrightarrow[\zeta]{\cong} \CoPB_{+}(n,m)
  \end{align*}

  By construction (the operad structure of $\PaPB'_{+}$ is directly mimicked from the operad structure of $\CoPB_{+}$), this yields a morphism of operads $\PaPB'_{+} \to \CoPB_{+}$. Since $\PaP_{+} \to \CoP_{+}$, $\PaB_{+} \to \CoB_{+}$ and $\PaSh_{+} \to \Sh_{+}$ are categorical, their product is too, thus the above morphism yields a categorical equivalence.
\end{proof}

\subsubsection{An operad defined from chord diagrams}
\label{sec.an-operad-defined}

We choose a Drinfeld associator $\phi : \PaB_{+} \to \CDh_{+}$; let $\tilde\phi_{+} : \PaB_{+} \to \PaCDhp$ be its unique lifting. Similarly to the definition of $\PaPB'_{+}$, we will define a relative operad $\PaPCD^{\phi}$ over $\PaCDhp$, that will combine parenthesized shuffles, parenthesized permutations and parenthesized chords diagrams.

\begin{definition}
  Let
  \[ \PaPCD^{\phi}(n,m) \subset (\PaP_{+}(n) \times \PaCDhp(m)) \times_{\Sigma_{n} \times \Sigma_{m}} \PaSh_{+}(n,m) \]
  be the full subgroupoid whose objects are classes $[u,\alpha,\mu]$ such that there exists $\sigma \in \Sigma_{m}$ satisfying $U(\mu(*_{\col{o}}, \dots, *_{\col{o}})) = \alpha \cdot \sigma$.
\end{definition}

\begin{remark}
  The objects of $\PaPCD^{\phi}$ are the same as the objects of $\PaPB'_{+}$.
\end{remark}

We also define a morphism $\rho_{\phi} : \PaPCD^{\phi} \to \PaCDhp[\cdot]$. Its definition is similar to that of \Cref{fig.rho}, but one cannot directly use graphical calculus anymore. In the picture, concatenation in $\PaB_{+}$ and $\PaB_{+}[\cdot]$ corresponded to $m_{\col{c}}$.  Pre- and post-composition by shuffles in $\PaPB'_{+}$ came from a morphism of operads $\sigma : \PaSh_{+} \to \PaB_{+}[\cdot]$. We have $\ob\sigma = \id$, and $\sigma(\mu \to \mu')$, denoted $\sigma^{\mu}_{\mu'}$ to simplify, is the unique morphism $\PaB[\cdot]$ of shuffle-type between the corresponding objects.

We also recall the canonical morphism $\eta : \PP \to \PP[\cdot]$, given in arity $m$ by $\PP(m) \cong \PP[m](0)$. Finally, we could define $\rho$ by the following formula (where $x \in \PaB_{+}(m)$ is identified with $x \in \PaB_{+}[0](m)$):
\begin{equation}
  \label{eq.def-rho}
  \rho[u,x,\mu] =
  \sigma^{m_{\col{c}}(\iota(\eta(\operatorname{tgt}(u))),
    \operatorname{tgt}(x))}_{\operatorname{tgt}(\mu)} \circ
  m_{\col{c}}(\iota(\eta(u)), x)
  \circ \sigma_{m_{\col{c}}(\iota(\eta(\operatorname{src}(u))),
    \operatorname{src}(x))}^{\operatorname{src}(\mu)}.
\end{equation}

By analogy, we define:
\[\rho_{\phi} : \PaPCD^{\phi}(n,m) \to \PaCDhp[n](m) \]

To simplify, we let $\tilde\phi_{+}(m_{\col{c}}) = \tilde{m}_{\col{c}}$, $\tilde\phi_{+} \circ \sigma = \tilde\sigma$, $\tilde\phi_{+}\iota\eta = \tilde\iota$, and we again identify $\alpha \in \PaCDhp(m)$ with $\alpha \in \PaCDhp[0](m)$. Then $\rho_{\phi}$ is given by:
\[ [u,\alpha,\mu] \mapsto \tilde\sigma^{\tilde{m}_{\col{c}}(\tilde\iota(\operatorname{tgt}(u)), \operatorname{tgt}(\alpha))}_{\operatorname{tgt}(\mu)} \circ \tilde{m}_{\col{c}}(\tilde\iota(u), \alpha) \circ \tilde\phi_{+} \bigl( \tilde\sigma_{\tilde{m}_{\col{c}}(\tilde\iota(\operatorname{src}(u)), \operatorname{src}(\alpha))}^{\operatorname{src}(\mu)} \bigr). \]

Graphically, $\rho_{\phi}$ looks like \Cref{fig.rho-phi}, where the gray boxes represent applications of the associator. We then define an operadic composition in a similar manner to \Cref{eq.op-papb-var}, replacing $\rho$ by $\rho_{\phi}$. We also define a right $\PaCDhp$-module similar to that of $\PaPB'_{+}$.

\begin{figure}[htb]
  \centering
  \def\svgwidth{0.35\textwidth} \import{fig/}{rho_phi.pdf_tex}
  \caption{Graphical representation of $\rho_\phi$}
  \label{fig.rho-phi}
\end{figure}

\begin{theorem} \label{thm.rat-model}
  The data $\PaPCD^{\phi}$, equipped with these structures, is a relative operad over $\PaCDhp$, and the morphism $\PaPB'_{+} \to \PaPCD^{\phi}$ induced by $\tilde\phi_{+}$ is a rational categorical equivalence of operads. There is thus a zigzag:
  \[ \pi \SC_{+} \qiso* \left( \pi \SC_{+} \right)_{\tMagma'_{+}} \qiso \PaPB_{+} \qiso \CoPB_{+} \qiso* \PaPB'_{+} \xrightarrow{\sim_{\Q}} \PaPCD^{\phi}. \]
\end{theorem}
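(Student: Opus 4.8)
The plan is to verify the three assertions of \Cref{thm.rat-model} separately, reducing each to a fact already established about $\PaPB'_{+}$, $\PaB_{+}$ and its completion.

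\textbf{Step 1: $\PaPCD^{\phi}$ is a relative operad over $\PaCDhp$.} This is a direct transcription of \Cref{prop.papb-var-operade}. The point is that the formulas defining the composition $\gamma$, the identity and the right $\PaCDhp$-module structure only used three formal ingredients about the ambient operads: that $\PaB_{+}[\cdot]$ is an operad relative over $\PaB_{+}$ (the shifted operad construction), that $\PaSh_{+}$ is a relative operad over $\magma_{+}$ with the same objects, and that there is a morphism of operads $\sigma : \PaSh_{+} \to \PaB_{+}[\cdot]$ which is the identity on objects and picks out shuffle-type morphisms. Replacing $\PaB_{+}$ by $\PaCDhp$ throughout, all three ingredients persist: $\PaCDhp[\cdot]$ is a relative operad over $\PaCDhp$, $\PaSh_{+}$ is unchanged, and $\tilde\sigma = \tilde\phi_{+}\circ\sigma : \PaSh_{+} \to \PaCDhp[\cdot]$ is still the identity on objects (since $\tilde\phi_{+}$ is the identity on objects, as recalled after the lifting diagram). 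The condition cutting out the full subgroupoid $\PaPCD^{\phi}(n,m)$ is literally the $\PaPB'_{+}$-condition with $x$ replaced by $\alpha$, and since $\tilde\phi_{+}$ fixes objects this subgroupoid matches the image of $\PaPB'_{+}$; hence the well-definedness of $\gamma$ (the rôle of the $\sigma^{-1}$), equivariance, unitality, and associativity go through verbatim, using that $\PaCDhp[\cdot]$ is an operad exactly where the proof of \Cref{prop.papb-var-operade} used that $\PaB_{+}[\cdot]$ is one.

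\textbf{Step 2: $\PaPB'_{+} \to \PaPCD^{\phi}$ is a rational categorical equivalence.} The morphism is induced arity-wise by $\tilde\phi_{+} : \PaB_{+} \to \PaCDhp$ on the middle factor, the identity on the $\PaP_{+}$ factor and on $\PaSh_{+}$; it is a morphism of operads precisely because both operad structures are built by the same formula from $\sigma$ and the shifted operads, and $\tilde\phi_{+}$ is compatible with $\sigma$ and with the shift. To see it is a rational categorical equivalence I pass to completions and check it becomes a categorical equivalence. On objects it is a bijection (it fixes objects); on morphisms, the groupoid $\PaPB'_{+}(n,m)$, being a fibered/pullback-type product of $\PaP_{+}(n)$, $\PaB_{+}(m)$ and $\PaSh_{+}(n,m)$ over $\Sigma_n\times\Sigma_m$, has morphism sets built from those of $\PaB_{+}(m)$ and the discrete pieces; completion commutes with these finite limits over the (discrete, already rational) symmetric groups and with the product, so $\widehat{\PaPB'_{+}}(n,m)$ is assembled in the same way from $\widehat{\PaB_{+}}(m)$ (the $\PaP_{+}$ and $\PaSh_{+}$ factors being discrete are unaffected). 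The induced map is then the one induced by $\widehat{\PaB_{+}}(m) \to \PaCDhp(m)$, which is an isomorphism of complete groupoids by the boxed statement recalled in \Cref{sec.drinf-assoc-chord}: a Drinfeld associator extends to $\tilde\phi_{+} : \PaBh_{+} \xrightarrow{\cong} \PaCDhp$. Fullness and essential surjectivity (here: isomorphism) in each arity follow, giving a categorical equivalence after completion, i.e.\ $\PaPB'_{+} \xrightarrow{\sim_{\Q}} \PaPCD^{\phi}$.

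\textbf{Step 3: assembling the zigzag.} Concatenate the zigzag $\pi\SC_{+} \qiso* (\pi\SC_{+})_{\tMagma'_{+}} \qiso \PaPB_{+} \qiso \CoPB_{+}$ of \Cref{prop.zigzag-unit} with the categorical equivalence $\PaPB'_{+} \qiso \CoPB_{+}$ of \Cref{prop.papb-prime} (read backwards) and the rational categorical equivalence of Step 2; since every categorical equivalence is in particular a rational categorical equivalence, the whole chain is a zigzag of rational equivalences, proving $\pi\SC_{+} \sim_{\Q} \PaPCD^{\phi}$.

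The main obstacle is Step 2, and within it the assertion that Malcev completion commutes with the pullback/fibered-product defining $\PaPB'_{+}(n,m)$ well enough that the completed morphism groupoid is governed by $\widehat{\PaB_{+}}(m)$ alone. I would handle this by noting that the fiber product is over the \emph{discrete} groupoids $\Sigma_n, \Sigma_m$ (whose Hopf groupoid is already complete, $\widehat{\Q[\Sigma_k]} = \Q[\Sigma_k]$) and that $\PaSh_{+}(n,m)$ is discrete as well, so the completion only acts nontrivially on the $\PaB_{+}(m)$-factor; a short direct computation of $\Hom$-sets before and after completion, rather than an abstract exactness statement, makes this rigorous. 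Everything else is a mechanical re-run of \Cref{prop.papb-var-operade,prop.papb-prime} with $\PaB_{+}$ replaced by $\PaCDhp$ and $\tilde\phi_{+}$ inserted in the appropriate spot.
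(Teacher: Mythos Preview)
Your proof is correct and follows the same three-step outline as the paper's, which treats Step~1 and Step~3 identically and condenses your Step~2 into the single observation that $\tilde\phi_{+}$ factors as the completion map $\PaB_{+}\to\PaBh_{+}$ followed by the isomorphism $\PaBh_{+}\xrightarrow{\cong}\PaCDhp$, hence is a rational equivalence, ``thus'' so is the induced map---you have simply unpacked that ``thus''. One terminological caveat: the construction $\times_{\Sigma_{n}\times\Sigma_{m}}$ is a balanced product (a quotient), not a fibered product, and $\PaP_{+}(n)$ and $\PaSh_{+}(n,m)$ are not discrete but merely have Hom-sets of size $\le 1$; neither slip affects your argument, since groupoids with at most one morphism between any two objects are still fixed by Malcev completion and the Hom-sets of the balanced product are therefore governed by the $\PaB_{+}(m)$-factor alone, exactly as you claim.
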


\begin{proof}
  The proof that $\PaPCD^{\phi}$ is a relative operad is identical to the proof of \Cref{prop.papb-var-operade}, and the fact that the morphism induced by $\tilde\phi_{+}$ is a morphism of operads follows by a direct inspection of the definitions.

  The morphism $\tilde\phi_{+} : \PaB_{+} \to \PaCDhp$ is not a
  categorical equivalence, but it factors as:
  \[\begin{tikzcd}
      \PaB_{+} \ar{rr}{\tilde\phi_{+}} \ar[swap]{dr}{\sim_{\Q}} && \PaCDhp \\
      {} & \PaBh_{+} \ar[dashed,swap]{ur}{\cong}
    \end{tikzcd}\]
  where the dashed morphism $\PaBh_{+} \to \PaCDhp$ is an isomorphism of operads in groupoids. It follows that $\tilde\phi_{+}$ is a rational equivalence of operads in groupoids, thus $\PaPB'_{+} \to \PaPCD^{\phi}$ is also a rational categorical equivalence. By combining this fact with \Cref{prop.zigzag-unit,prop.papb-prime}, we finally get the zigzag of the theorem.
\end{proof}

\subsection{Non-formality}
\label{sec.non-formality}

A theorem of Livernet~\cite[Theorem 3.1]{Livernet2015} states that the Swiss-Cheese operad is not \emph{formal}: its homology $H_{*}(\SC)$ is not equivalent to its operad of chains $C_{*}(\SC)$. We give an interpretation of this fact here.

We consider a stronger version of formality, which involves the models of rational homotopy theory of Sullivan (see~\cite[§II]{Fresse2016a} for the applications to operads). Let
\[ \rlz{-} : \cdga_{+}^{\mathrm{op}} \to s\Set \]
be the \emph{derived} Sullivan realization functor,\footnote{The underived realization functor maps a commutative, unitary differential graded algebra $A$ to the simplicial set $\langle A \rangle = \hom_{\cdga_{+}}(A, \Omega_{PL}^{*}(\Delta^{\bullet}))$.  The derived version takes a cofibrant replacement first.} that uses commutative dg-algebras as rational models for spaces.

We rely on \emph{co}homological models to study rational homotopy theory, so we consider dual structures of our objects and we use cooperads rather than operads. For example, the cooperad $\Com^{*}$ governing cocommutative coalgebras is dual to the operad $\Com$ governing commutative algebras.

To encode the rational homotopy-theoretic information, we add commutative structures to our objects, and we consider \textbf{Hopf cooperads}, i.e.\ cooperads in the category of commutative algebras. The Sullivan realization of a Hopf cooperad is a simplicial operad.

\subsubsection{Splitting of $H_{*}(\SC)$ as a Voronov product}
\label{sec.homology-dd_n-sc}

A theorem of Cohen~\cite{Cohen1976} describes the homology of the little disks operads $\operadify{e}_{n} := H_{*}(\DD_{n})$. In low dimensions, $\operadify{e}_{1} \cong \Ass$ is the operad governing associative algebras, while $\operadify{e}_{2} \cong \Ger$ is the operad governing Gerstenhaber algebras. These are \textbf{Hopf operads} (i.e.\ operads in the category of cocommutative coalgebras): the coproduct of the product of either $\Ass$ or $\Ger$ is $\Delta(\mu) = \mu \otimes \mu$, while the coproduct of the bracket of $\Ger$ is $\Delta(\lambda) = \mu \otimes \lambda + \lambda \otimes \mu$. Their duals $\Ass^{*}$ and $\Ger^{*}$ are Hopf cooperads.

The homology of the Swiss-Cheese operad $\Sc := H_{*}(\SC)$ governs the action of a Gerstenhaber algebra on an associative algebra. A theorem of Voronov~\cite[Theorem 3.3]{Voronov1999} (see also~\cite[Theorem 6.1.1]{HoefelLivernet2012} for this particular variant) states that an algebra over $H_{*}(\SC)$ is a triple $(B,A,f)$ where $B$ is a Gerstenhaber algebra, $A$ is an associative algebra, and $f : B \to A$ is a central morphism of associative algebras, which thus makes $A$ into an associative algebra over the commutative algebra $B$. Let us note that \Cref{cor.alg-papb} is a categorical analogue of Voronov's theorem, the Drinfeld center of a monoidal category replacing the center of an associative algebra.

This theorem can be interpreted in the following way.

\begin{definition} \label{def.voronov-product}
  Given two operads $\PP$ and $\QQ$ and a morphism $\Com \to \PP$, one can define the \textbf{Voronov product} $\PP \otimes \QQ$~\cite{Voronov1999}. It is a relative operad over $\PP$, defined by $(\PP \otimes \QQ)(n,m) = \PP(m) \otimes \QQ(n)$. Insertion of a closed-output operation:
  \[ \circ_{i}^{\col c} : \bigl( \PP(m) \otimes \QQ(n) \bigr) \otimes \PP(m') \to \PP(m+m'-1) \otimes \QQ(n) \]
  uses the operad structure of $\PP$, while insertion of an open-output operation:
  \[ \circ_{j}^{\col o} : \bigl( \PP(m) \otimes \QQ(n) \bigr) \otimes \bigl( \PP(m') \otimes \QQ(n') \bigr) \to \PP(m+m') \otimes \QQ(n+n'-1) \]
  uses the operad structure of $\QQ$ and the commutative product $\Com \to \PP$.
\end{definition}

Algebras over $\PP \otimes \QQ$ are triplets $(B,A,\nu)$ where $B$ is a $\PP$-algebra, $A$ is a $\QQ$-algebra, and $\nu : B \otimes A \to A$ is an action that makes $A$ into a $\QQ$-algebra over the commutative algebra $B$ (cf.\ ibid.\ for the definition).

\begin{remark}
  An Eckmann--Hilton-type argument shows that the algebra structure of $\PP$ defined by the morphism $\Com \to \PP$ has to be commutative for the composition product to even be associative.
\end{remark}

Voronov's version of the Swiss-Cheese $\SC^{\mathrm{vor}}$ operad then
satisfies:
\[ \Sc^{\mathrm{vor}} := H_{*}(\SC^{\mathrm{vor}}) \cong \Ger \otimes \Ass. \]
This isomorphism is moreover an isomorphism of Hopf operads.

In the case of $\Sc = H_{*}(\SC)$, one has to use the unital structures of $\Ass$ and $\Ger$. We have (and this is still an isomorphism of Hopf operads):
\[ \Sc_{+} := H_{*}(\SC_{+}) \cong \Ger_{+} \otimes \Ass_{+}. \]

If we remove the components with zero closed inputs and zero open inputs, we then get $\Sc = H_{*}(\SC)$, a relative operad over $\Ger$ whose algebras are described above \Cref{def.voronov-product}. But one should not forget that:
\begin{align*}
  \Sc(0,m) & = \Ger_{+}(m) \otimes \Ass_{+}(0) = \Ger(m) \\
  \neq \Sc^{\mathrm{vor}}(0,m) & = \Ger(m) \otimes \Ass(0) = 0.
\end{align*}

Indeed, we still keep the components that have a nonzero number of total inputs. We have in particular that $\Sc(0,1) \cong \Ger(1) = \Q$ is spanned by the morphism between the Gerstenhaber algebra to the associative algebra. We use the notation:
\[ \Sc = \Ger_{+} \otimes_{0} \Ass_{+} \]
to express the fact that $\Sc$ is obtained as the Voronov product $\Ger_{+} \otimes \Ass_{+}$ from which we remove the components with zero closed inputs and zero open inputs.

\subsubsection{Comparison}
\label{sec.comparison}

By theorems of Kontsevich~\cite{Kontsevich1999} ($\K = \mathbb{R}$, $n \ge 2$) and Tamarkin~\cite{Tamarkin2003} ($\K = \Q$, $n = 2$), the little disks operads are formal: $C_{*}(\DD_{n}^{+}) \simeq H_{*}(\DD_{n}^{+})$. Fresse and Willwacher~\cite{FresseWillwacher2015} give another proof of this result ($\K = \Q$, $n \ge 3$), and show that it can be enhanced in the rational homotopy context: there is a rational equivalence of simplicial operads $\DD_{n} \simeq_{\Q} \rlz{H^{*}(\DD_{n})}$, which implies the rational formality of $\DD_{n}$. In low dimensions, we thus have $\DD_{1} \simeq_{\Q} \rlz{\Ass^{*}}$ (easy computation). Tamarkin proves that, from the existence of rational Drinfeld associators, it follows that $\DD_{2} \simeq_{\Q} \rlz{\Ger^{*}}$. These rational equivalences are also compatible with the unital structures.

Given two (Hopf) cooperads $\PP_{c}$, $\QQ_{c}$, and a morphism $\PP_{c} \to \Com^{*}$, one can define the Voronov product $\PP_{c} \otimes \QQ_{c}$, similarly to \Cref{def.voronov-product}. This is a relative (Hopf) cooperad under $\PP_{c}$. It is defined by formulas that are formally dual to the ones defining $\PP \otimes \QQ$. If these cooperads admit counital extensions, we can similarly define $\PP^{+}_{c} \otimes_0 \QQ^{+}_{c}$.

The inclusion $\DD_{1}^{+} \hookrightarrow \DD_{2}^{+}$ induces in cohomology a morphism of Hopf cooperads $\Ger_{+}^{*} \to \Ass_{+}^{*}$. This morphism factors through a morphism $\Ger_{+}^{*} \to \Com_{+}^{*}$ (the coproduct of a Gerstenhaber coalgebra is cocommutative). We then get an isomorphism:
\[ \Sc^{*} = H^{*}(\SC) \cong (\Ger_{+} \otimes_0 \Ass_{+})^{*} \cong \Ger_{+}^{*} \otimes_0 \Ass_{+}^{*}. \]

The morphism $\Ger_{+}^{*} \to \Com_{+}^{*}$ induces a morphism of simplicial operads $\Com_{+} \cong \rlz{\Com_{+}^{*}} \to \rlz{\Ger_{+}^{*}}$. Since the realization functor is monoidal, we get:\footnote{The monoidal structure on simplicial sets being the Cartesian product, we denote the Voronov product of two simplicial operads with $\times$ instead of $\otimes$, and we use $\times_{0}$ to say that we remove the components with zero inputs.}
\[ \rlz{H^{*}(\SC)} \simeq \rlz{\Ger_{+}^{*}} \times_{0} \rlz{\Ass_{+}^{*}}. \]

It is known that $\PaP \simeq \pi \DD_{1}$ and $\CDh \simeq_{\Q} \pi \DD_{2}$~\cite[§5,§10]{Fresse2016a}. There is an obvious morphism $\Com \to \CDh$ sending the generator to the empty chord diagram (see~\cite{FresseWillwacher2015}), which we can use to build the Voronov product of the operads in groupoids $\CDh$ and $\PaP$. Since the fundamental groupoid functor is monoidal too, we finally have:
\[ \pi\rlz{\Sc^{*}} \simeq \pi\rlz{\Ger_{+}^{*}} \times \pi\rlz{\Ass_{+}^{*}} \simeq_{\Q} \CDh_{+} \times_{0} \PaP_{+}. \]

The operad $\SC$ is not formal~\cite{Livernet2015}, therefore $\pi \SC$ is not equivalent to $\pi\rlz{\Sc^{*}} \simeq_{\Q} \PaP_{+} \mathop{\bar\times} \CDh_{+}$. Thus, our construction $\PaPCD^{\phi}$ rectifies the model arising from the homology of $\SC$ to retrieve, from $\PaP$ and $\CDh$, an operad in groupoids which is actually rationally equivalent to $\pi \SC$.

\bibliographystyle{alpha}
\bibliography{article}
\end{document}